\newcommand{\qbinom}[2]{\genfrac{[}{]}{0pt}{0}{#1}{#2}}
\newcommand{\intU}{\mathbf{U}}
\newcommand{\wt}{\text{wt}}
\newcommand{\fr}{{\textbf{Fr}'}}
\newcommand{\ofr}{\textbf{Fr}}
\newcommand{\intA}{\mathbf{A}}
\newcommand{\dfr}{\fr^{,*}}
\newcommand{\dofr}{\textbf{Fr}^*}
\newcommand{\intV}{\mathbf{V}}
\newcommand{\intI}{\mathbf{I}}
\newcommand{\tB}{{\widetilde{B}}}
\newcommand{\ex}{\text{ex}}
\newcommand{\fz}{\text{fz}}
\newcommand{\kk}{\mathbf{k}}
\newcommand{\cA}{\mathscr{A}}
\newcommand{\cF}{\mathscr{F}}
\newcommand{\cL}{\mathscr{S}}
\newcommand{\bx}{\mathbf{x}}
\newcommand{\be}{\mathbf{e}}
\newcommand{\dt}{\mathrm{t}}
\newtheorem{thm}{Theorem}
\newtheorem{theorem}[thm]{Theorem}
\newtheorem{prop}[thm]{Proposition}
\newtheorem{lemma}[thm]{Lemma}
\newtheorem{cor}[thm]{Corollary}
\newtheorem{corollary}[thm]{Corollary}
\theoremstyle{definition}
\newtheorem{remark}[thm]{Remark}
\title{Quantum Frobenius splittings and cluster structures}
\author[Jinfeng Song]{Jinfeng Song}
\address{Department of Mathematics, National University of Singapore, Singapore.}
\email{j\_song@u.nus.edu}
\begin{document}
	
	\subjclass[2020]{13F60,17B37} 
	
	\maketitle
	
	\begin{abstract}
		We prove that the duals of the quantum Frobenius morphisms and their splittings by Lusztig are compatible with quantum cluster monomials. After specialisation, we deduce that the canonical Frobenius splittings on flag varieties are compatible with cluster algebra structures on Schubert cells. 
	\end{abstract}
	
	\section{Introduction}
	
	Let $\mathfrak{g}=\mathfrak{n}_-\oplus\mathfrak{h}\oplus\mathfrak{n}$ be a triangular decomposition of a symmetrizable Kac-Moody Lie algebra, and $U_q(\mathfrak{n})$ be the quantized universial enveloping algebra associated to $\mathfrak{n}$, with the Lusztig integral form $\intU_q(\mathfrak{n})$ which is the $\mathbb{Z}[q^{\pm1}]$-subalgebra generated by divided powers.
	
	Let $l$ be an odd integer which is coprime to all the root lengths of $\mathfrak{g}$, and $\varepsilon$ be a primitive $l$-th root of unity. Lusztig \cites{Lu90,Lu10} constructed a quantum Frobenius morphism $\ofr:\intU_\varepsilon(\mathfrak{n})\rightarrow\intU_1(\mathfrak{n})$ from the quantized universial enveloping algebra at $l$-th root of unity to the classical universial enveloping algebra and a splitting $\fr:\intU_1(\mathfrak{n})\rightarrow\intU_\varepsilon(\mathfrak{n})$ such that $\ofr\circ\fr=id$. These two maps have important applications in representation theory and geometry. To name a few, the map $\ofr$ plays essential role in the study of representations of reductive groups over positive characteristic \cites{AJS,KL94}, and the map $\fr$ is used to construct Frobenius splittings of Schubert varieties in \cite{KL}.
	
	Lusztig's original definitions of these two maps are only derived through brute force computations. It is desirable to obtain more conceptual understandings for these maps. Several progresses have been made towards this question. McGerty \cite{Mc} gave a Hall algebra construction of the map $\ofr$. Qi \cite{Qi} categorified $\ofr$ in rank one at prime roots of unity via p-DG algebras. The map $\fr$ seems to be more mysterious, and no similar results are known. In the current paper we make another attempt to answer this question from the cluster theory point of view. Our approach treats both of the maps $\ofr$ and $\fr$ simultaneously.
	
	
	Let $\intA_q(\mathfrak{n})$ be the graded dual of the integral form $\intU_q(\mathfrak{n})$. Take any element $w$ in the Weyl group of $\mathfrak{g}$. Assume $l(w)=r$, where $l(\cdot)$ is the length function. Following \cite{Ki}, the (integral) quantized coordinate ring $\intA_q(\mathfrak{n}(w))$ of the unipotent subgroup $N(w)=N\cap w^{-1}N_-w$ is defined as a $\mathbb{Z}[q^{\pm1}]$-subalgebra of $\intA_q(\mathfrak{n})$, which is spanned by a dual PBW-type basis associated to a reduced expression of $w$. It follows from Kang--Kashiwara--Kim--Oh \cite{KKKO17} and Goodearl--Yakimov \cite{GY} that the $\mathbb{Z}[q^{\pm1/2}]$-algebra $\intA_{q^{1/2}}(\mathfrak{n}(w))=\mathbb{Z}[q^{\pm1/2}]\otimes_{\mathbb{Z}[q^{\pm1}]}\intA_q(\mathfrak{n}(w))$ has a \emph{quantum cluster algebra} structure of rank $r$ in the sense of Berenstein--Zelevinsky \cite{BZ05}. For any \emph{cluster} $\bx$ of $\intA_{q^{1/2}}(\mathfrak{n}(w))$ and $\mathbf{a}\in\mathbb{N}^r$, we denote $\bx^{\mathbf{a}}$ to be the corresponding \emph{quantum cluster monomial} (see \S \ref{sec:qca} for definitions).
	
	The $\mathbb{Z}[\varepsilon]$-algebras $\intA_\varepsilon(\mathfrak{n})$ and $\intA_1(\mathfrak{n})$ are defined to be the base change of $\intA_{q^{1/2}}(\mathfrak{n})$ via $q^{1/2}\mapsto \varepsilon^{(l+1)/2}$ and $q^{1/2}\mapsto 1$ respectively. They are naturally isomorphic to the graded duals of $\intU_\varepsilon(\mathfrak{n})$ and $\intU_1(\mathfrak{n})$. By taking duals of the map $\ofr$ and $\fr$, we get $\mathbb{Z}[\varepsilon]$-linear maps $\dofr:\intA_1(\mathfrak{n})\rightarrow\intA_\varepsilon(\mathfrak{n})$ and $\dfr:\intA_\varepsilon(\mathfrak{n})\rightarrow \intA_1(\mathfrak{n})$. For any element $f$ in $\intA_{q^{1/2}}(\mathfrak{n})$, we use $_\varepsilon f$ (resp., $_1 f$) to denote its image in $\intA_\varepsilon(\mathfrak{n})$ (resp., $\intA_1(\mathfrak{n})$). The main result of this paper is the following.
	
	\begin{theorem}\label{thm:main}
		Let $w$ be a Weyl group element of length $r$ and $\mathbf{x}^{\mathbf{a}}$ $(\mathbf{a}\in\mathbb{N}^{r})$ be a quantum cluster monomial in $\intA_{q^{1/2}}(\mathfrak{n}(w))$. We have
		\[
		\dofr({_1\mathbf{x}^\mathbf{a}})={_\varepsilon \mathbf{x}^{l\mathbf{a}}}\qquad\text{and}\qquad \dfr({_\varepsilon\mathbf{x}^{\mathbf{a}}})={_1\mathbf{x}^{\mathbf{a}/l}}.
		\]
		Here $\mathbf{x}^{\mathbf{a}/l}$ is understood as $0$ if $\mathbf{a}\not\in l\mathbb{N}^{r}$.
	\end{theorem}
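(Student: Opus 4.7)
The plan is to prove both identities by induction on mutation distance from an initial seed, using two structural facts: $\dofr$ is a $\mathbb{Z}[\varepsilon]$-algebra morphism (being dual to the Hopf algebra map $\ofr$), while $\ofr\circ\fr=\mathrm{id}$ dualises to $\dfr\circ\dofr=\mathrm{id}_{\intA_1(\mathfrak{n})}$. A first reduction: since $\dofr$ is multiplicative and the quantum twist appearing in ${_\varepsilon\mathbf{x}^{l\mathbf{a}}}$ scales quadratically in $l$ and thus evaluates to $1$ at $\varepsilon$, the first identity reduces to its cluster-variable case $\dofr({_1 x_k}) = {_\varepsilon x_k^l}$ for every cluster variable $x_k$ in every cluster.

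For the base case, fix a reduced expression of $w$: the initial cluster variables are unipotent quantum minors $D_k$ whose dual PBW expansions are explicit. Lusztig's formula $\ofr(F_i^{(n)}) = F_i^{(n/l)}$ (zero if $l\nmid n$) dualises to ${_1}F^*(\mathbf{c})\mapsto{_\varepsilon}F^*(l\mathbf{c})$ on the dual PBW basis, from which $\dofr({_1 D_k}) = {_\varepsilon D_k^l}$ follows upon comparing the dual PBW expansions of $D_k$ and of $D_k^l$. For the inductive step, apply $\dofr$ to the quantum exchange relation
\[
{_1 x_k}\cdot{_1 x_k'} = {_1\mathbf{x}^{\mathbf{b}_+}} + {_1\mathbf{x}^{\mathbf{b}_-}};
\]
the inductive hypothesis yields ${_\varepsilon x_k^l}\cdot\dofr({_1 x_k'}) = {_\varepsilon\mathbf{x}^{l\mathbf{b}_+}} + {_\varepsilon\mathbf{x}^{l\mathbf{b}_-}}$. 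Independently, write $x_k' = A' + B'$ with $A' = x_k^{-1}\cdot q^{\alpha_+}\mathbf{x}^{\mathbf{b}_+}$ and $B' = x_k^{-1}\cdot q^{\alpha_-}\mathbf{x}^{\mathbf{b}_-}$; the compatibility $B^T\Lambda=(D,0)$ then gives $A'B' = q^{-d_k}B'A'$, where $d_k$ is the root length attached to the mutated vertex. Since $\gcd(d_k,l)=1$ by hypothesis on $l$, the scalar $q^{-d_k}$ specialises at $\varepsilon$ to a primitive $l$-th root of unity, and so the quantum ``freshman's dream'' $(A'+B')^l = A'^l + B'^l$ holds in $\intA_\varepsilon$, producing ${_\varepsilon(x_k')^l} = {_\varepsilon x_k^{-l}}\cdot({_\varepsilon\mathbf{x}^{l\mathbf{b}_+}} + {_\varepsilon\mathbf{x}^{l\mathbf{b}_-}})$. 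Comparison closes the induction.

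For the second identity, the case $\mathbf{a}\in l\mathbb{N}^r$ follows immediately from the first identity and $\dfr\circ\dofr=\mathrm{id}$. The vanishing $\dfr({_\varepsilon\mathbf{x}^\mathbf{a}})=0$ for $\mathbf{a}\notin l\mathbb{N}^r$ is more delicate because $\dfr$ is only a coalgebra map. I would expand $\mathbf{x}^\mathbf{a}$ in the dual PBW basis and apply $\dfr$ termwise using $\dfr({_\varepsilon F^*(\mathbf{c})}) = {_1 F^*(\mathbf{c}/l)}$ (zero if $l\nmid\mathbf{c}$), and then combine the unitriangularity between the dual canonical and dual PBW bases (cluster monomials being dual canonical basis elements, by Kang--Kashiwara--Kim--Oh) with the assertion that the leading PBW exponent of $\mathbf{x}^\mathbf{a}$ is $l$-divisible exactly when $\mathbf{a}\in l\mathbb{N}^r$ to conclude. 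The principal obstacle I foresee is this last assertion about leading PBW exponents in arbitrary (non-initial) clusters; establishing it requires tracking how the leading-term map behaves under mutation, which I expect to be the main technical burden of the proof.
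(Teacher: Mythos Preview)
Your inductive scaffolding and the freshman's-dream computation for the $\dofr$ induction step match the paper. But there are two genuine gaps, both stemming from an unjustified assumption about how $\ofr$ and $\fr$ interact with the dual PBW basis. Your claim that $\dofr$ sends ${_1}F^*(\mathbf{c})$ to ${_\varepsilon}F^*(l\mathbf{c})$ presupposes that $\ofr$ commutes with Lusztig's braid operators $T_i$, since the PBW root vectors are built from the $e_i$ via these. In the Kac--Moody generality of the paper this is not available, and the analogous statement for $\fr$ (which you invoke for $\dfr$) is not known even in finite type; the paper flags precisely this (``there is no obvious compatibility between $\fr$ and braid group actions''). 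For the base case the paper instead proves $\dofr({_1D_t})={_\varepsilon D_t^l}$ by recognising $D_t^l$ as the minor $D(l\lambda_t,l\varpi_{i_t})$ up to a power of $q$ and checking both sides against each canonical basis element via Lemma~\ref{le:prca}.

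The more serious gap is the vanishing $\dfr({_\varepsilon\mathbf{x}^\mathbf{a}})=0$ for $\mathbf{a}\notin l\mathbb{N}^r$. Your plan (expand in dual PBW, track leading exponents through mutation) relies on the unjustified formula for $\dfr$ on dual PBW elements and on the leading-term control you yourself flag as the main burden. The paper avoids all of this via two structural identities you do not use: $\dfr(fg)=\dfr(gf)$ (Lemma~\ref{prop:vani}) and $\dfr\big(\dofr(f)g\big)=f\,\dfr(g)$ (Proposition~\ref{prop:Frli}). For the initial seed, if some \emph{exchangeable} entry $a_t$ is not divisible by $l$, the compatible-pair condition lets one write a nonzero multiple of ${_\varepsilon\mathbf{D}^{\mathbf{a}+l\mathbf{a}'}}$ as a commutator; the first identity annihilates it under $\dfr$, and the second plus cancellation in the domain $\intA_1(\mathfrak{n}(w))$ (Lemma~\ref{le:qeo}) forces $\dfr({_\varepsilon\mathbf{D}^{\mathbf{a}}})=0$. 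If only \emph{frozen} entries fail divisibility, a direct highest-weight computation in $\intV(\mu)$ with $\langle h_i,\mu\rangle<l$ does the job. In the inductive step one multiplies ${_\varepsilon(\mathbf{x}'')^\mathbf{a}}$ by ${_\varepsilon(x_k')^{lc}}=\dofr({_1(x_k')^c})$ to return to monomials in the old cluster with non-$l$-divisible $k$-th exponent, applies the inductive hypothesis, and cancels via the second identity. Your sketch also omits the supporting facts that $\dofr,\dfr$ preserve $\intA(\mathfrak{n}(w))$ (Corollary~\ref{cor:rest}) and that these rings are integral domains (Lemma~\ref{le:qeo}), both of which are needed to make the cancellations in the induction honest.
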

	
	Theorem \ref{thm:main} is proved in \S \ref{sec:proof}. It is noteworthy that our result is related to the question on the compatibility between canonical basis and quantum Frobenius.
	
	The \emph{canonical basis} by Lusztig and Kashiwara is a particular linear basis of $\intU_q(\mathfrak{n})$. In \cite{Mc}*{Remark 5.10}, McGerty raised a question on the compatibility between maps $\ofr$ and $\fr$ with canonical basis. It is later shown by Baumann \cite{Bau} that in general they are not compatible in the strict sense, meaning that these maps do not send a basis element to another basis element or zero, but they are compatible up to certain filtrations. 
	
	The canonical basis of $\intU_q(\mathfrak{n})$ gives the \emph{dual canonical basis} of its graded dual $\intA_q(\mathfrak{n})$. By Kang--Kashiwara--Kim--Oh \cite{KKKO17} and Qin \cite{Qin}, cluster monomials form a subset of dual canonical basis up to scalar. Therefore we expect that our result can provide new insights for McGerty's question.
	
	As an application of Theorem \ref{thm:main}, we show the maps $\dofr$ and $\dfr$ preserve the quantized coordinate ring of the unipotent subgroup associated to the same Weyl group element (Corollary \ref{cor:rest}). To be more precise, for any $w$ in the Weyl group, we obtain the following two inclusions, 
	\begin{equation*}
	\dofr\big(\intA_1(\mathfrak{n}(w))\big)\subset \intA_\varepsilon(\mathfrak{n}(w))\qquad\text{and}\qquad   \dfr\big(\intA_\varepsilon(\mathfrak{n}(w))\big)\subset \intA_1(\mathfrak{n}(w)).
	\end{equation*}
	
	Here $\intA_\varepsilon(\mathfrak{n}(w))$ and $\intA_1(\mathfrak{n}(w))$ are $\mathbb{Z}[\varepsilon]$-subalgebras of $\intA_\varepsilon(\mathfrak{n})$ and $\intA_1(\mathfrak{n})$ respectively, which are spanned by images of dual PBW-type basis under corresponding base changes. The first inclusion also follows from the compatibility between $\ofr$ and braid group actions, when $\mathfrak{g}$ is of finite type. The second inclusion is more subtle, since there is no obvious compatibility between $\fr$ and braid group actions.
	
	We next discuss some geometric applications of our result. Assume $\mathfrak{g}$ is of finite type for the rest of this section. Let $\kk$ be an algebraically closed field of characteristic $l$. Let $G$ be the reductive group defined over $\kk$ associated to the Lie algebra $\mathfrak{g}$, $B$ be the standard Borel subgroup and $G/B$ be the associated flag variety. For elements $w$ and $v$ in the Weyl group of $G$, we denote $C_w=BwB/B$ to be the \emph{Schubert cell}, $C^v=B_-vB/B$ to be the \emph{opposite Schubert cell} and $C_w^v=C_w\cap C^v$ to be the \emph{open Richardson variety}. 
	
	Kumar--Littelmann \cite{KL} showed that the map $\dfr$ provides a quantum lift of a \emph{Frobenius splitting} of the flag variety $G/B$. By \cite{KL}*{Theorem 6.4} and \cite{BK}*{Theorem 4.1.15}, this splitting is the unique \emph{$B$-canonical} splitting of $G/B$ and moreover the splitting compatibly splits all the \emph{Schubert variety} and \emph{opposite Schubert variety}. Therefore the canonical splitting of $G/B$ induces canonical splittings of Schubert cells as well as open Richardson varieties.
	
	One can identify the unipotent subgroup $N(w)$ with the Schubert cell $C_{w^{-1}}$ via the map $n\mapsto n\cdot w^{-1}B/B$. Under this isomorphism the $\kk$-algebra $\intA_\kk(\mathfrak{n}(w))$ obtained from $\intA_{q^{1/2}}(\mathfrak{n}(w))$ by specialising $q^{1/2}$ at 1 is canonically isomorphic to the coordinate ring of the Schubert cell $C_{w^{-1}}$ \cite{Ki}*{Theorem 4.44}. One can show that $\intA_\kk(\mathfrak{n}(w))$ carries a (classical) cluster algebra structure via specialisation, by applying similar arguments as in \cite{GLS20}. Our result implies the compatibility between the canonical splittings on Schubert cells and their cluster algebra structures.
	
	\begin{cor}[Proposition \ref{prop:frclu} \& Corollary \ref{cor:alggeo}]\label{cor:2}\
		\begin{enumerate}
			\item The restriction of $\dfr$ provides a quantum lift of the canonical splitting of the Schubert cell.
			\item The canonical splittings of Schubert cells are compatible with their cluster algebra structures, that is, they divides the degrees by $l$ when acting on any cluster monomials.
		\end{enumerate}
	\end{cor}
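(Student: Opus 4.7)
The plan is to deduce both parts of Corollary \ref{cor:2} by specialising the quantum identity of Theorem \ref{thm:main} at $\varepsilon\mapsto 1$, using Corollary \ref{cor:rest} to restrict $\dfr$ to the subalgebras associated to $w$.

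For part (1), I first invoke Corollary \ref{cor:rest} to restrict $\dfr$ to a $\mathbb{Z}[\varepsilon]$-linear map $\intA_\varepsilon(\mathfrak{n}(w))\to \intA_1(\mathfrak{n}(w))$. Specialising $q^{1/2}\mapsto 1$ produces a $\kk$-linear endomorphism of $\intA_\kk(\mathfrak{n}(w))$, which under the identification of \cite{Ki}*{Theorem 4.39} is an endomorphism of $\mathcal{O}(C_{w^{-1}})$. I would then identify this endomorphism with the restriction of the canonical Frobenius splitting of $G/B$ to the Schubert cell: the theorem of Kumar--Littelmann identifies the specialisation of $\dfr$ at the global level with the $B$-canonical splitting, and restriction to the open Schubert cell should commute with specialisation and with the algebra inclusion $\intA_\kk(\mathfrak{n}(w))\hookrightarrow \intA_\kk(\mathfrak{n})$. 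The uniqueness of the $B$-canonical splitting from \cite{BK}*{Theorem 4.1.15} then pins down the identification.

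For part (2), the remaining ingredient is the classical cluster algebra structure on $\intA_\kk(\mathfrak{n}(w))$ obtained by specialising the quantum cluster structure on $\intA_{q^{1/2}}(\mathfrak{n}(w))$ via $q^{1/2}\mapsto 1$, along lines analogous to \cite{GLS20}. Under this specialisation both $_\varepsilon\bx^{\mathbf{a}}$ and $_1\bx^{\mathbf{a}}$ map to the same classical cluster monomial $\bx^{\mathbf{a}}\in \intA_\kk(\mathfrak{n}(w))$. Specialising the second identity of Theorem \ref{thm:main} then yields that the splitting sends $\bx^{\mathbf{a}}$ to $\bx^{\mathbf{a}/l}$, interpreted as zero unless $\mathbf{a}\in l\mathbb{N}^{r}$; by part (1), this splitting is precisely the canonical Frobenius splitting on the Schubert cell.

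The main obstacle I expect lies in making part (1) rigorous: one must confirm that specialisation of $\dfr$ commutes with the passage to the subalgebra corresponding to $w$, and that the resulting $\kk$-linear endomorphism of $\mathcal{O}(C_{w^{-1}})$ genuinely coincides with the restriction of the canonical splitting induced by compatibly splitting $\overline{C_{w^{-1}}}\subset G/B$. Part (2) should then follow as a purely formal consequence, provided the quantum-to-classical passage of the cluster structure along $q^{1/2}\mapsto 1$ is carefully documented.
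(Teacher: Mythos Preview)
Part (2) of your proposal is correct and matches the paper's Proposition~\ref{prop:frclu}: specialising Theorem~\ref{thm:main} immediately gives the cluster-monomial formula for $\varphi_w$.

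Part (1), however, has a genuine gap that you partly anticipate but underestimate. The inclusion $\intA_\kk(\mathfrak{n}(w))\hookrightarrow\intA_\kk(\mathfrak{n})$ does \emph{not} correspond geometrically to the locally closed embedding of the Schubert cell $C_{w^{-1}}$ into $G/B$; rather (see \S\ref{sec:red}) it is the comorphism of the reduction map $\pi_w^{w_0}:C_{w_0}\to C_{w^{-1}}$ between \emph{distinct} Schubert cells. So even granting that Kumar--Littelmann identifies the global specialisation of $\dfr$ with the canonical splitting of the big cell $C_{w_0}$, restricting to the subalgebra yields a map on $\intA_\kk(\mathfrak{n}(w))$ that is not a priori the canonical splitting of $C_{w^{-1}}$: the latter is induced from $G/B$ via $C_{w^{-1}}\hookrightarrow\overline{C_{w^{-1}}}\hookrightarrow G/B$, a genuinely different operation. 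Making your approach work would require knowing that canonical splittings are compatible with reduction maps, which in the paper is Corollary~\ref{cor:red}, proved only \emph{after} Corollary~\ref{cor:alggeo}.

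The paper instead argues intrinsically via a uniqueness characterisation (Proposition~\ref{prop:char}): it shows that $\varphi_w$ is the unique $H$-equivariant splitting of $N(w)$ that compatibly splits all the Richardson divisors $Y_i$. Compatible splitting of the $Y_i$ follows from the cluster-monomial formula applied to frozen variables (Proposition~\ref{prop:comp}); uniqueness is a weight calculation forcing the associated section of $\omega_{N(w)}^{1-p}$ to be $\sigma_w^{p-1}$ up to scalar. One then checks directly that the geometric canonical splitting $\varphi_w'$ satisfies the same two properties (Corollary~\ref{cor:alggeo}), whence $\varphi_w=\varphi_w'$.
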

	
	
	We remark that Benito--Muller--Rajchgot--Smith showed in \cite{BMRS15}*{Theorem 3.7} that any \emph{upper cluster algebra} (over $\kk$) admits a unique Frobenius splitting which divides the degrees by $l$ when acting on any cluster monomials. We call it the \emph{cluster splitting}. The associated upper cluster algebra of $\intA_\kk(\mathfrak{n}(w))$ is canonically isomorphic to the coordinate ring of the open Richardson variety $C_{w^{-1}}^e$. Then another way to state Corollary \ref{cor:2} (2) is that the canonical splitting of the open Richardson variety of the form $C_w^e$ ($w\in W$) coincides with its cluster splitting.
	
	The (upper) cluster algebra structures on general open Richardson varieties are recently obtained by \cite{CCGGLSS} and \cite{GLSB}. We expect that the same statement remains true for any open Richardson varieties.
	
	As another geometric application of Theorem \ref{thm:main}, we show that the canonical splitting of the Schubert cell is compatible with reduction maps.
	
	Suppose $w=v'v$ in the Weyl group of $\mathfrak{g}$ such that $l(w)=l(v')+l(v)$. Following  \cite{MR}*{\S 4.3}, define the \emph{reduction map} between Schubert cells,
	\[
	\pi_v^w:C_{w^{-1}}\rightarrow C_{v^{-1}},\qquad b\cdot w^{-1}B/B\mapsto b\cdot v^{-1}B/B.
	\]

	
	\begin{cor}\label{cor:red}
		The canonical splittings on Schubert cells are compatible with reduction maps, that is, we have the following commuting diagram,
		\begin{equation*}
		\begin{tikzcd}
		&\intA_\kk(\mathfrak{n}(v))\arrow[r,"(\pi_v^w)^*"] \arrow[d,"\varphi_{v}"'] &\intA_\kk(\mathfrak{n}(w)) \arrow[d,"\varphi_{w}"]\\& \intA_\kk(\mathfrak{n}(v))\arrow[r,"(\pi_v^w)^*"]& \intA_\kk(\mathfrak{n}(w)).	
		\end{tikzcd}
		\end{equation*}
		Here $(\pi_v^w)^*$ is the comorphism, and the map $\varphi_v$ (resp., $\varphi_w$) is the canonical splitting of the Schubert cell $C_{v^{-1}}$ (resp., $C_{w^{-1}}$).
	\end{cor}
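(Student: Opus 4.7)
\emph{Plan.} The strategy is to lift the claimed commutative square to the quantum setting over $\mathbb{Z}[\varepsilon]$, where $\dfr$ replaces the canonical splittings, and then specialise to $\kk$. By Corollary \ref{cor:2}(1) applied to both $v$ and $w$, the maps $\varphi_v$ and $\varphi_w$ arise from the restrictions of $\dfr$ to $\intA_\varepsilon(\mathfrak{n}(v))$ and $\intA_\varepsilon(\mathfrak{n}(w))$ after the base change $\mathbb{Z}[\varepsilon]\to\kk$, so it suffices to produce a quantum lift $(\pi_v^w)^*_q$ of the comorphism, verify that the corresponding square with $\dfr$ commutes, and specialise.

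For the construction, I would fix a reduced expression $\mathbf{i}_v$ of $v$ and prepend one of $v'$ (using $l(w)=l(v')+l(v)$) to obtain a reduced expression $\mathbf{i}_w$ of $w$. Classically, $\pi_v^w:N(w)\twoheadrightarrow N(v)$ is the projection associated to the decomposition of $N(w)$ induced by $w=v'v$, and its comorphism lifts to a $\mathbb{Z}[q^{\pm1/2}]$-algebra inclusion
\[ (\pi_v^w)^*_q:\intA_{q^{1/2}}(\mathfrak{n}(v))\hookrightarrow\intA_{q^{1/2}}(\mathfrak{n}(w)) \]
compatible with the dual PBW bases attached to $\mathbf{i}_v$ and $\mathbf{i}_w$ (a standard compatibility; see \cite{Ki}, \cite{GY}). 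I would then check that after base change $q^{1/2}\to 1$ over $\kk$ this map recovers $(\pi_v^w)^*$ of \cite{MR}, via the identifications $\intA_\kk(\mathfrak{n}(u))\cong\kk[C_{u^{-1}}]$.

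Next I would verify that $(\pi_v^w)^*_q$ sends quantum cluster monomials to quantum cluster monomials under a $\mathbb{Z}$-linear exponent embedding $\mathbf{a}\mapsto\tilde{\mathbf{a}}$, which in particular commutes with division by $l$. With the above compatible reduced expressions, the initial quantum seed of $\intA_{q^{1/2}}(\mathfrak{n}(v))$ attached to $\mathbf{i}_v$ should appear as a \emph{sub-seed} of the initial quantum seed of $\intA_{q^{1/2}}(\mathfrak{n}(w))$ attached to $\mathbf{i}_w$, its cluster variables being a subset (possibly with extra freezing) of those of the larger seed, and its exchange matrix a principal submatrix. Combined with Theorem \ref{thm:main}, this gives at the $\varepsilon$-level
\[ \dfr\bigl((\pi_v^w)^*_q({_\varepsilon\mathbf{x}^{\mathbf{a}}})\bigr)={_1\mathbf{x}^{\tilde{\mathbf{a}}/l}}=(\pi_v^w)^*_q\bigl(\dfr({_\varepsilon\mathbf{x}^{\mathbf{a}}})\bigr), \]
so the quantum square commutes on quantum cluster monomials. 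After specialising to $\kk$, the Schubert cell $C_{v^{-1}}$ is an affine space, hence $\intA_\kk(\mathfrak{n}(v))$ is the polynomial ring in its initial cluster variables and the classical cluster monomials linearly span it; commutativity on this spanning set forces the full classical diagram to commute.

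The main obstacle is the sub-seed compatibility in the third step: tracking the initial quantum seeds of $\intA_{q^{1/2}}(\mathfrak{n}(v))$ and $\intA_{q^{1/2}}(\mathfrak{n}(w))$ carefully, with their frozen versus mutable variables, so that the exponent embedding $\mathbf{a}\mapsto\tilde{\mathbf{a}}$ respects multiplication by $l$, together with verifying that the ensuing quantum inclusion matches the comorphism of $\pi_v^w$ of \cite{MR} after classical specialisation. Once these compatibilities are in place, the remaining steps amount to routine specialisation.
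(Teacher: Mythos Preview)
Your approach is correct but takes a considerably longer route than the paper's. The paper's proof is essentially a one-line observation: by construction (Proposition \ref{prop:frclu} and Corollary \ref{cor:alggeo}), both $\varphi_v$ and $\varphi_w$ are obtained by base-changing the \emph{same} global map $\dfr$ on $\intA_\varepsilon(\mathfrak{n})$ and then restricting to the subalgebras $\intA_\kk(\mathfrak{n}(v))\subset\intA_\kk(\mathfrak{n}(w))\subset\intA_\kk(\mathfrak{n})$. Once one checks (via the compatible reduced expressions and \eqref{eq:poly1}) that $(\pi_v^w)^*$ coincides with the natural inclusion $\intA_\kk(\mathfrak{n}(v))\hookrightarrow\intA_\kk(\mathfrak{n}(w))$, the square commutes automatically, with no appeal to Theorem \ref{thm:main} or to cluster monomials at all.

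Your argument instead routes through the cluster structure: you verify that the inclusion carries initial cluster monomials to initial cluster monomials (via the exponent padding $\mathbf{a}\mapsto(\mathbf{a},0,\dots,0)$), invoke Theorem \ref{thm:main} on both sides, and then use that initial cluster monomials span $\intA_\kk(\mathfrak{n}(v))$ classically. This is valid, and the sub-seed compatibility you flag as the ``main obstacle'' does hold at the level of initial seeds with the compatible reduced expressions (the first $l(v)$ minors $D_t$ for $w$ literally equal those for $v$ inside $\intA_q(\mathfrak{n})$), so there is no gap. But the machinery is unnecessary: the whole point is that $\dfr$ lives on the ambient $\intA_\varepsilon(\mathfrak{n})$ \emph{before} any restriction, so compatibility with any chain of subalgebra inclusions is free. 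Your detour through Theorem \ref{thm:main} buys nothing here, whereas the paper's argument would work equally well for any pair of nested $\intA_\kk(\mathfrak{n}(v))\subset\intA_\kk(\mathfrak{n}(w))$ regardless of cluster considerations.
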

	Corollary \ref{cor:red} is proved in \S \ref{sec:red}.
	
	The paper is organised as follows. In \S 2 we recall relevant results and prove the main theorem. In \S 3 we recall the concept of Frobenius splittings and deduce geometric consequences by specialising $q$ at 1. 
	
	\noindent {\bf Acknowledgement: }The author would like to thank his supervisor Huanchen Bao for many helpful discussions. The author is supported by Huanchen Bao’s MOE grant A-0004586-00-00 and A-0004586-01-00. 
	
	\section{Quantum Frobenius splittings}\label{sec:sec2}
	
	\subsection{Quantum cluster algebras}\label{sec:qca}
	
	In this subsection, we recall quantum cluster algebras following \cite{BZ05} and \cite{KKKO17}*{\S 5}.
	
	Fix a finite index set $J=J_\ex\sqcup J_\fz$ with the decomposition into exchangeable indices set $J_\ex$ and frozen indices set $J_\fz$. Let $\cF$ be a skew field over $\mathbb{Q}(q^{1/2})$. Here $q^{1/2}$ is an indeterminate.
	
	Let $\Lambda=(\lambda_{ij})_{i,j\in J}$ be a skew-symmetric integer valued $J\times J$-matrix. By abuse of notations, we will also denote by $\Lambda$ the skew-symmetric bilinear form on $\mathbb{Z}^J$ given by $\Lambda(\be_i,\be_j)=\lambda_{ij}$. Here $\{\be_i\mid i\in J\}$ is standard basis of $\mathbb{Z}^J$. A tuple $\bx=(x_i)_{i\in J}$ consisting of elements in $\cF$ is called \emph{$\Lambda$-commutative} if $x_ix_j=q^{\lambda_{ij}}x_jx_i$, for $i,j\in J$. For such a tuple $\bx$ and $\mathbf{a}=(a_i)_{i\in J}\in \mathbb{Z}^J$, define the element
	\begin{equation}\label{eq:twpro}
	\bx^\mathbf{a}=q^{1/2\sum_{i>j}a_ia_j\lambda_{ij}}\prod_{i\in J}x_i^{a_i}\qquad \text{in $\cF$.}
	\end{equation}
	Here we take an arbitrary total order on $J$, and the product is taken increasingly. One can check that this product is independent of the choice of the total order. For any $\mathbf{a},\mathbf{b}\in \mathbb{Z}^J$, by direct computations one has
	\begin{equation}\label{eq:qprod}
	\bx^\mathbf{a}\bx^\mathbf{b}=q^{1/2\Lambda(\mathbf{a},\mathbf{b})}\bx^{\mathbf{a}+\mathbf{b}}\qquad \text{and} \qquad \bx^{\mathbf{a}}\bx^\mathbf{b}=q^{\Lambda(\mathbf{a},\mathbf{b})}\bx^{\mathbf{b}}\bx^{\mathbf{a}}.
	\end{equation}
	
	A $\Lambda$-commutative tuple $\bx=(x_i)_{i\in J}$ is called \emph{algebraically independent} if elements $\bx^{\mathbf{a}}$ ($\mathbf{a}\in\mathbb{Z}^J$) are linearly independent in $\cF$.
	
	Let $\tB=(b_{ij})_{(i,j)\in J\times J_\ex}$ be an integer-valued $J\times J_\ex$ matrix. The pair $(\Lambda,\tB)$ is called \emph{compatible} if for any $j\in J_\ex$ and $i\in J$, we have
	\begin{equation}\label{eq:comp}
	\sum_{k\in J} b_{kj}\lambda_{ki}=\delta_{ij}d_j,
	\end{equation}
	for some positive integer $d_j$. This is equivalent to the requirement that the matrix $D=\tB^T\Lambda$ consists of two blocks: the $J_\ex\times J_\ex$ diagonal matrix with positive diagonal entries $d_j$ ($j\in J_\ex$), and the $J_\ex\times J_\fz$ zero matrix. The matrix $D$ is called the \emph{skew-symmetrizer} of $(\Lambda,\tB)$.
	
	A \emph{quantum seed} $\cL=(\bx,\Lambda,\tB)$ in $\cF$ consists of a compatible pair $(\Lambda,\tB)$ and a $\Lambda$-commutative algebraically independent tuple $\bx=(x_i)_{i\in J}$ in $\cF$. The tuple $\bx$ is called the \emph{cluster} of the quantum seed $\cL$, and its entries are called \emph{cluster variables}. Elements in $\{x_i\}_{i\in J_\ex}$ are called \emph{exchangeable variables}, while elements in $\{x_i\}_{i\in J_\fz}$ are called \emph{frozen variables}.
	
	Let $\cL=(\bx=(x_i)_{i\in J},\Lambda,\tB)$ be a quantum seed in $\cF$ and $k\in J_\ex$ be an exchangeable index. Define matrices $E=(e_{ij})_{i,j\in J}$ and $F=(f_{ij})_{i,j\in J_\ex}$ as follows,
	\begin{equation*}
	e_{ij}=\left\{\begin{array}{ll}
	\delta_{ij}, & \text{if }j\neq k, \\
	-1, & \text{if }i=j=k, \\
	{[-b_{ik}]_+}, & \text{if }i\neq j=k,
	\end{array}\right.\quad
	f_{ij}=\left\{\begin{array}{ll}
	\delta_{ij}, & \text{if }i\neq k, \\
	-1, & \text{if }i=j=k,\\
	{[b_{kj}]_+}, & \text{if }i=k\neq j.
	\end{array}\right.
	\end{equation*} 
	Here we write $[n]_+=\text{max}(n,0)$ for any $n\in\mathbb{Z}$. Set $$\mu_k(\tB)=E\tB F \quad\text{and}\quad \mu_k(\Lambda)=E^T\Lambda E.$$ Then the pair $\mu_k(\Lambda,\tB)=(\mu_k(\Lambda),\mu_k(\tB))$ is again compatible with the same skew-symmetrizer $D$ (\cite{BZ05}*{Proposition 3.4}). It is called the \emph{mutation of $(\Lambda,\tB)$ in the direction $k$}.
	
	Let $\mathbf{b}_k=\sum_{i\in J}b_{ik}\be_i\in\mathbb{Z}^J$ be the $k$-th column vector of $\tB$. Set
	\[
	\mu_k(x_i)=\left\{\begin{array}{ll}
	\bx^{-\be_k+[\mathbf{b}_k]_+}+\bx^{-\be_k+[-\mathbf{b}_k]_+}, & \text{if }i=k, \\
	x_i, & \text{if }i\neq k.
	\end{array}\right.
	\]
	Here we write $[\mathbf{n}]_+=([n_i]_+)_{i\in J}$ for any $\mathbf{n}=(n_i)_{i\in J}\in \mathbb{Z}^J$. Then one can show that the tuple $\mu_k(\bx)=(\mu_k(x_i))_{i\in J}$ is $\mu_k(\Lambda)$-commutative and algebraically independent. Hence $\mu_k(\cL)=(\mu_k(\bx),\mu_k(\Lambda),\mu_k(\tB))$ is also a quantum seed in $\cF$, called the \emph{mutation of $\cL$ in the direction $k$}. Two quantum seeds $\cL'$ and $\cL''$ are called \emph{mutation-equivalent}, denoted by $\cL'\sim\cL''$, if they can be obtained from another by a sequence of mutations.
	
	Given a quantum seed $\cL$ in $\cF$, The \emph{quantum cluster algebra} $\cA_q(\cL)$ is the $\mathbb{Z}[q^{\pm1/2}]$-subalgebra of $\cF$ generated by the union of clusters of all quantum seeds which are mutation-equivalent to $\cL$. The seed $\cL$ is called the \emph{initial seed} of $\cA_q(\cL)$.
	
	Take any quantum seed $\cL'\sim\cL$. Let $\bx'=(x'_i)_{i\in J}$ be the cluster of $\cL'$. The \emph{quantum Laurent phenomenon} (\cite{BZ05}*{Corollary 5.2}) asserts that the cluster algebra $\cA_q(\cL)$ is contained in the $\mathbb{Z}[q^{\pm1/2}]$-subalgebra of $\cF$ generated by $(x_i')^{\pm1}$ ($i\in J_\ex$) and $x'_i$ ($i\in J_\fz$). Elements of the form $(\bx')^{\mathbf{a}}$ ($\mathbf{a}\in\mathbb{N}^J$) are called \emph{cluster monomials} in $\cA_q(\cL)$.
	
	\subsection{Quantum groups}
	
	In this subsection we recall basic construction in quantum groups following \cite{Lu10} and \cite{KKKO17}*{\S 1}.
	
	We fix once for all an index set $I$ and a \emph{Kac-Moody root datum} $(A,P,\Pi,P^\vee,\Pi^\vee)$ which consists of 
	
	(a) a \emph{symmetrizable generalised Cartan matrix} $A=(a_{ij})_{i,j\in I}$, that is, an integer valued matrix such that: (i) $a_{ii}=2$, for all $i\in I$, (ii) $a_{ij}<0$, for all $i,j\in I$, (iii) there exists a diagonal matrix $D=\text{diag}(\dt_i\mid i\in I)$ with positive integers $\dt_i$ such that $DA$ is symmetric. We normalize $(\dt_i)_{i\in I}$ such that they are relatively prime.
	
	(b) a finitely generated free abelian group $P$, called the \emph{weight lattice}.
	
	(c) a set of linearly independent elements $\Pi=\{\alpha_i\mid i\in I\}\subset P$, called the set of \emph{simple roots}.
	
	(d) the dual group $P^\vee=\text{Hom}_\mathbb{Z}(P,\mathbb{Z})$, called the \emph{coweight lattice}.
	
	(e) a set of elements $\Pi^\vee=\{h_i\mid i\in I\}\subset P^\vee$, called the set of \emph{simple coroots}, such that $\langle h_i,\alpha_j\rangle=a_{ij}$, for $i,j\in I$.
	
	We further assume the datum to be \emph{simply-connected}, that is, for each $i\in I$, there exits $\varpi_i\in P$ such that $\langle h_j,\varpi_i\rangle=\delta_{ij}$ for all $j\in I$. The element $\varpi_i$ is called the \emph{fundamental weight} associated to $i$.
	
	Set $P_+=\{\mu\in P\mid \langle h_i,\mu\rangle \geq 0, \forall i\in I\}$ to be the set of \emph{dominant weights}. The free abelian group $Q=\bigoplus_{i\in I}\mathbb{Z}\alpha_i$ is called the \emph{root lattice}. Let $\Delta_+$ be the set of simple roots, and $\Delta_-=-\Delta_+$. Set $Q_+=\bigoplus_{i\in I}\mathbb{Z}_{\geq 0}\alpha_i$, and $Q_-=\bigoplus_{i\in I}\mathbb{Z}_{\leq 0}\alpha_i$. There is a symmetric bilinear form $(\cdot,\cdot)$ on $\mathbb{Q}\otimes_\mathbb{Z} P$ such that  $$(\alpha_i,\alpha_j)=\dt_ia_{ij}\quad \text{and}\quad \langle h_i,\lambda\rangle=\frac{2(\alpha_i,\lambda)}{(\alpha_i,\alpha_i)}\qquad \text{for } \lambda \in P \text{ and }i,j\in I.$$
	
	Let $\mathfrak{g}$ be the symmetrizable Kac-Moody algebra over $\mathbb{Q}$ associated to the root datum, and $W$ be the Weyl group of $\mathfrak{g}$ with generators $s_i$ ($i\in I$). Let $l(\cdot)$ be the length function of $W$. A \emph{reduced expression} of $w$ is a sequence $(i_1,\dots,i_{r})\in I^{r}$ such that $w=s_{i_1}\dots s_{i_r}$ in $W$ and $r=l(w)$.
	
	The \emph{quantum group} $U_q(\mathfrak{g})$ associated to the root datum is the unital $\mathbb{Q}(q)$-algebra with generators $e_i$ ($i\in I$), $f_i$ ($i\in I$), and $q^h$ ($h\in P^\vee$), which subject to the following relations
	\begin{align*}
	&q^0=1,\quad q^hq^{h'}=q^{h+h'}, \quad  q^he_iq^{-h}=q^{\langle h,\alpha_i\rangle}e_i,\quad q^hf_iq^{-h}=q^{-\langle h,\alpha_i\rangle},\\& e_if_j-f_je_i=\delta_{ij}\frac{q^{\dt_ih_i}-q^{-\dt_ih_i}}{q_i-q_i^{-1}}, \\
	&\sum_{s=0}^{1-a_{ij}}(-1)^s\qbinom{1-a_{ij}}{s}_ie_i^{1-a_{ij}-s}e_je_i^s=0,\quad \sum_{s=0}^{1-a_{ij}}(-1)^s\qbinom{1-a_{ij}}{s}_if_i^{1-a_{ij}-s}f_jf_i^s=0,
	\end{align*}
	for $h,h'\in P^\vee$ and $i,j\in I$ with $i\neq j$. 
	
	Here we set
	\[
	q_i=q^{\dt_i},\quad [n]_i=\frac{q_i^n-q_i^{-n}}{q_i-q_i^{-1}},\quad [n]_i!=[1]_i\dots[n]_i\quad \text{and } \qbinom{n}{s}_i=\frac{[n]_i!}{[n-s]_i![s]_i!}.
	\]
	
	Let $U_q(\mathfrak{n})$ (resp., $U_q(\mathfrak{n}_-)$) be the subalgebra of $U_q(\mathfrak{g})$ generated by $e_i$'s (resp., $f_i$'s). For any $i\in I$, and $n\in\mathbb{N}$, define \emph{divided powers}, $$e_i^{(n)}=\frac{e_i^n}{[n]_i!}\qquad\text{and}\qquad f_i^{(n)}=\frac{f_i^n}{[n]_i!}.$$
	The \emph{integral form} $\intU_q(\mathfrak{n})$ (resp., $\intU_q(\mathfrak{n}_-)$) is the $\mathbb{Z}[q^{\pm1}]$-subalgebra of $U_q(\mathfrak{n})$ (resp., $U_q(\mathfrak{n}_-)$) generated by $e_i^{(n)}$ (resp., $f_i^{(n)}$) for all $i\in I$ and $n\in \mathbb{N}$. 
	
	There is a $\mathbb{Q}(q)$-algebra anti-automorphism $\phi$ on $U_q(\mathfrak{g})$ such that $$\phi(e_i)=f_i,\quad \phi(f_i)=e_i,\quad\text{and}\quad \phi(q^h)=q^h,$$ for $i\in I$ and $h\in P^\vee$. It is clear that $\phi\big(\intU_q(\mathfrak{n})\big)=\intU_q(\mathfrak{n}_-).$
	
	Note that $U_q(\mathfrak{n})$ has a natural $Q_+$-grading by setting $e_i$ to be of degree $\alpha_i$ ($i\in I$). The degree of a homogeneous element $x$ in $U_q(\mathfrak{n})$ will be called the \emph{weight} of $x$, denoted by $\wt(x)$. For $\gamma\in Q_+$, we write $U_q(\mathfrak{n})_\gamma$ to be the subspace of $U_q(\mathfrak{n})$ consisting of homogeneous elements of weight $\gamma$.
	
	Let $\mathbf{B}$ be the canonical basis of $U_q(\mathfrak{n})$ (\cite{Lu10}*{14.4}). For $\lambda\in Q_+$, set $\mathbf{B}_\lambda\subset \mathbf{B}$ to be the subset consisting of elements of weight $\lambda$. 
	
	Following \cite{Lu10}*{1.2.10}, we endow the tensor product $U_q(\mathfrak{n})\otimes_{\mathbb{Q}(q)} U_q(\mathfrak{n})$ with the (twisted) algebra structure given by
	\[
	(x_1\otimes x_2)\cdot (y_1\otimes y_2)=q^{-(\wt(x_2),\wt(y_1))}(x_1y_1\otimes x_2y_2),
	\]
	for homogeneous elements $x_1,x_2,y_1,y_2$ in $U_q(\mathfrak{n})$. One can identify the tensor product $\intU_q(\mathfrak{n})\otimes_{\mathbb{Z}[q^{\pm1}]}\intU_q(\mathfrak{n})$ as a $\mathbb{Z}[q^{\pm1}]$-subspace of $U_q(\mathfrak{n})\otimes_{\mathbb{Q}(q)} U_q(\mathfrak{n})$, which is moreover a $\mathbb{Z}[q^{\pm1}]$-subalgebra under the twisted algebra structure.
	
	Let $r:U_q(\mathfrak{n})\rightarrow U_q(\mathfrak{n})\otimes_{\mathbb{Q}(q)} U_q(\mathfrak{n})$ be the algebra homomorphism such that
	\[
	r(e_i)=e_i\otimes 1+1\otimes e_i,\quad \text{for $i\in I$}.
	\]
	The map $r$ is called the \emph{coproduct} on $U_q(\mathfrak{n})$.
	One can show (\cite{Lu10}*{1.4.2}) that 
	\begin{equation}\label{eq:cop}
	r(e_i^{(n)})=\sum_{s=0}^nq_i^{-s(n-s)}e_i^{(s)}\otimes e_i^{(n-s)},
	\end{equation}
	for any $i\in I$ and $n\in\mathbb{N}$. Then we have
	\begin{equation}\label{eq:rres}
	r\big(\intU_q(\mathfrak{n})\big)\subset\intU_q(\mathfrak{n})\otimes_{\mathbb{Z}[q^{\pm1}]} \intU_q(\mathfrak{n}).
	\end{equation}
	
	Define the \emph{graded dual} of $U_q(\mathfrak{n})$ as following,
	\[
	A_q(\mathfrak{n})=\bigoplus_{\gamma\in Q^+}\text{Hom}_{\mathbb{Q}(q)}(U_q(\mathfrak{n})_\gamma,\mathbb{Q}(q)).
	\]
	We endow $A_q(\mathfrak{n})$ with the $\mathbb{Q}(q)$-algebra structure given by 
	\[
	(\varphi\cdot \psi)(x)=(\varphi\otimes\psi)(r(x)),
	\]
	for $\varphi,\psi\in A_q(\mathfrak{n})$ and $x\in U_q(\mathfrak{n})$. This algebra structure is well-defined by weight consideration.
	
	
	Define
	\[
	\intA_q(\mathfrak{n})=\{f\in A_q(\mathfrak{n})\mid f\big(\intU_q(\mathfrak{n})\big)\subset \mathbb{Z}[q^{\pm1}]\}
	\]
	to be the \emph{dual integral form}. Thanks to \eqref{eq:rres}, we deduce that $\intA_q(\mathfrak{n})$ is a $\mathbb{Z}[q^{\pm1}]$-subalgebra of $A_q(\mathfrak{n})$.
	
	By \cite{Lu10}*{1.2.3 \& 1.2.5} there is a unique symmetric nondegenerate biliear form $(\cdot,\cdot):U_q(\mathfrak{n})\times U_q(\mathfrak{n})\rightarrow \mathbb{Q}(q)$ such that
	
	(a) $(1,1)=1$, $(e_i,e_j)=\delta_{ij}(1-q_i^2)^{-1}$, for all $i,j\in I$;
	
	(b) $(x,yy')=(r(x),y\otimes y')$ and $(xx',y)=(x\otimes x',r(y))$, for all $x,x',y,y'\in U_q(\mathfrak{n})$,
	where the bilinear form on $U_q(\mathfrak{n})\otimes U_q(\mathfrak{n})$ is defined by $(x_1\otimes x_2,y_1\otimes y_2)=(x_1,y_1)(y_1,y_2)$.
	
	Then one has the algebra isomorphism 
	\[
	\iota: U_q(\mathfrak{n})\xrightarrow{\sim} A_q(\mathfrak{n}),
	\]
	given by $\iota(x)(y)=(x,y)$ for $x,y\in U_q(\mathfrak{n})$.
	
	\subsection{Quantized coordinate rings of unipotent subgroups}\label{sec:PBW}
	
	We denote by $T_i$ ($i\in I$) the braid group action $T_{i,-1}'$ on $U_q(\mathfrak{g})$ as in \cite{Lu10}*{37.1.3}.
	
	
	Let $w\in W$. Fix a reduced expression $\underline{w}=(i_r,\dots, i_1)$ of $w$. For any $1\leq k\leq r$, define elements $$\beta_k=s_{i_1}\dots s_{i_{k-1}}(\alpha_{i_k})\in Q_+ \quad \text{and}\quad e_{\beta_k}=T_{s_{i_1}}\dots T_{s_{i_{k-1}}}(e_{i_k})\in U_q(\mathfrak{n})_{\beta_k}.$$
	
	For any $\mathbf{n}=(n_1,\dots,n_r)\in \mathbb{N}^r$, define elements
	\[
	e_{\underline{w}}(\mathbf{n})=e_{\beta_1}^{(n_1)}\dots e_{\beta_r}^{(n_r)}\in U_q(\mathfrak{n}),
	\]
	where $e_{\beta_k}^{(n_k)}=e_{\beta_k}^{n_k}/[n_k]_{i_k}!$ ($1\leq k\leq r$). Also define elements
	\[
	e_{\underline{w}}'(\mathbf{n})=\frac{e_{\underline{w}}(\mathbf{n})}{(e_{\underline{w}}(\mathbf{n}),e_{\underline{w}}(\mathbf{n}))}\in U_q(\mathfrak{n})\qquad \text{and}\qquad e^*_{\underline{w}}(\mathbf{n})=\iota(e_{\underline{w}}'(\mathbf{n}))\in A_q(\mathfrak{n}).
	\]
	
	Let $U_q(\mathfrak{n}(w))$ be the $\mathbb{Q}(q)$-subspace of $U_q(\mathfrak{n})$ spanned by elements $e_{\underline{w}}(\mathbf{n})$ ($\mathbf{n}\in\mathbb{N}^r$). It follows from \cite{Lu10}*{Proposition 40.2.1} and \cite{Ki}*{Proposition 4.11} that $U_q(\mathfrak{n}(w))$ is a $\mathbb{Q}(q)$-subalgebra of $U_q(\mathfrak{n})$ which is independent of the choice of the reduced expressions of $w$. Set $\intU_q(\mathfrak{n}(w))=U_q(\mathfrak{n}(w))\cap \intU_q(\mathfrak{n})$ to be the integral form. Then $\{e_{\underline{w}}(\mathbf{n})\mid \mathbf{n}\in\mathbb{N}^r\}$ is a $\mathbb{Z}[q^{\pm1}]$-basis of $\intU_q(\mathfrak{n}(w))$, called a \emph{PBW basis} associated to the reduced expression $\underline{w}$.
	
	Define
	$A_q(\mathfrak{n}(w))=\iota\big(U_q(\mathfrak{n}(w))\big)$ to be the quantized coordinate ring of the unipotent subgroup associated with the finite set $\Delta_+\cap w^{-1}\Delta_-$. Let $\intA_q(\mathfrak{n}(w))=\intA_q(\mathfrak{n})\cap A_q(\mathfrak{n}(w))$ be the integral form. The set $\{e^*_{\underline{w}}(\mathbf{n})\mid \mathbf{n}\in\mathbb{N}^r\}$ gives a $\mathbb{Z}[q^{\pm1}]$-basis of $\intA_q(\mathfrak{n}(w))$, called a \emph{dual PBW basis} associated to $\underline{w}$. 
	
	
	By \cite{Ki17}*{Theorem 2.19}, one has $$U_q(\mathfrak{n}(w))=U_q(\mathfrak{n})\cap T_{w^{-1}}U_q(\mathfrak{n}_-).$$ Here $T_{w^{-1}}=T_{i_1}\dots T_{i_r}$. Also define $$U_q(\mathfrak{n}(w)')=U_q(\mathfrak{n})\cap T_{w^{-1}}U_q(\mathfrak{n})\qquad \text{and}\qquad A_q(\mathfrak{n}(w)')=\iota\big(U_q(\mathfrak{n}(w)')\big).$$
	The algebra $A_q(\mathfrak{n}(w)')$ is called the quantized coordinate ring of the pro-unipotent subgroup associated with the co-finite subset $\Delta_+\cap w^{-1}\Delta_+$.
	Set $$\intA_q(\mathfrak{n}(w)')=A_q(\mathfrak{n}(w)')\cap \intA_q(\mathfrak{n})$$ to be its integral form. Then $\intA_q(\mathfrak{n}(w)')$ is a free $\mathbb{Z}[q^{\pm1}]$-module (\cite{Ki17}*{Theorem 3.3}). Moreover, by \cite{Ki17}*{Theorem 1.1 (2)}, multiplication gives the isomorphism as $\mathbb{Z}[q^{\pm1}]$-modules:
	\begin{equation}\label{eq:tens}
	\intA_q(\mathfrak{n}(w))\otimes_{\mathbb{Z}[q^{\pm1}]}\intA_q(\mathfrak{n}(w)')\xrightarrow{\sim}\intA_q(\mathfrak{n}).
	\end{equation}

	\subsection{Unipotent quantum minors }\label{sec:minor}
	
	For any $\lambda\in P_+$, let $V(\lambda)$ be the integrable simple $U_q(\mathfrak{g})$-module with highest weight $\lambda$ (\cite{KKKO17}*{1.2}). We fix a nonzero vector $v_\lambda\in V(\lambda)$ of weight $\lambda$, and set $\intV(\lambda)=\intU_q(\mathfrak{n}_-)v_\lambda$ to be the integral form of $V(\lambda)$. Then $\intV(\lambda)$ is a free $\mathbb{Z}[q^{\pm1}]$-submodule of $V(\lambda)$.
	
	For $\lambda\in P_+$, define $\intI(\lambda)$ to be the right ideal of $\intU_q(\mathfrak{n})$ given by
	\begin{equation}\label{eq:ideal}
	\intI(\lambda)=\sum_{i\in I} e_i^{(\langle h_i,\lambda\rangle+1)}\intU_q(\mathfrak{n}).
	\end{equation}
	Set
	\[
	\intI'(\lambda)=\phi\big(\intI(\lambda)\big)=\sum_{i\in I}\intU_q(\mathfrak{n}_-)f_i^{(\langle h_i,\lambda\rangle +1)}
	\]
	to be the left ideal of $\intU_q(\mathfrak{n}_-)$. Then we have a $\mathbb{Z}[q^{\pm1}]$-module isomorphism
	\[
	\intU_q(\mathfrak{n}_-)/\intI'(\lambda)\xrightarrow{\sim}\intV(\lambda), \qquad x+\intI'(\lambda)\mapsto xv_\lambda.
	\]

	For $w\in W$ with a fixed reduced expression $\underline{w}=(i_1,\dots,i_r)$, define the extremal vector  $$v_{w\lambda}=f_{i_1}^{(a_1)}\dots f_{i_r}^{(a_r)}v_\lambda\in V(\lambda),$$ where $a_t=\langle h_{i_t},s_{i_{t+1}}\dots s_{i_r}(\lambda)\rangle$, for $1\leq t\leq r$. By the quantum Verma identity (\cite{Lu10}*{Proposition 39.3.7}), the element $v_{w\lambda}$ only depends on the weight $w\lambda$, and not on the choice of $w$ and its reduced expressions.
	
	Following \cite{Lu10}*{Proposition 19.1.2}, there is a unique non-degenerate symmetric bilinear form $(\cdot,\cdot)_\lambda: V(\lambda)\times V(\lambda)\rightarrow\mathbb{Q}(q)$ such that $$(v_\lambda,v_\lambda)_\lambda=1\qquad\text{and}\qquad (xu,v)_\lambda=(u,\phi(x)v)_\lambda,$$ for $x\in U_q(\mathfrak{g})$ and $u,v\in V(\lambda)$. Moreover, the bilinear form restricts to the integral form: $(\cdot,\cdot)_\lambda:\intV(\lambda)\times\intV(\lambda)\rightarrow\mathbb{Z}[q^{\pm1}]$ (\cite{Lu10}*{19.3.3 (c)}).
	
	By definition, for any $\lambda\in P_+$, $w\in W$ and $x\in\intI(\lambda)$, we have
	\begin{equation}\label{eq:pair}
	(xv_{w\lambda},v_\lambda)_\lambda=(v_{w\lambda},\phi(x)v_\lambda)_\lambda=0.
	\end{equation}
	
	For $\lambda\in P_+$ and $w,u\in W$, define the \emph{unipotent quantum minor}
	$D(w\lambda,u\lambda)$ in $A_q(\mathfrak{n})$ by $$D(w\lambda,u\lambda)(x)=(xv_{w\lambda},v_{u\lambda})_\lambda,\qquad \text{for }x\in U_q(\mathfrak{n}).$$
	
	It is proved in \cite{GLS}*{Proposition 6.3} that any (nonzero) unipotent quantum minor is a dual canonical basis element which can be described explicitly using the theory of crystal basis. The following lemma is a special case of \emph{loc. cit.} 
	
	\begin{lemma}\label{le:prca}
		For any $\lambda\in P_+$ and $w\in W$, there is a unique element $b$ in $\mathbf{B}_{\lambda-w\lambda}$ such that $b\not\in \intI(\lambda)$. One has
		\[
		b\in e_{i_1}^{(a_1)}\dots e_{i_r}^{(a_r)}+\intI(\lambda),
		\]
		where $\underline{w}=(i_r,\dots,i_1)$ is any reduced expression of $w$, and $a_t=\langle h_{i_t},s_{i_{t-1}}\dots s_{i_1}(\lambda)\rangle$, for $1\leq t\leq r$. Moreover we have $D(w\lambda,\lambda)(b')=\delta_{b,b'}$ for any $b'\in\mathbf{B}$.
	\end{lemma}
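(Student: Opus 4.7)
The plan is to invoke two standard inputs: Kashiwara's compatibility of the canonical basis $\mathbf{B}$ with the ideal $\intI(\lambda)$, and the fact that extremal weight vectors of $V(\lambda)$ are lower global basis elements. First I would recall that $\mathbf{B}\cap\intI(\lambda)$ is a $\mathbb{Z}[q^{\pm 1}]$-basis of $\intI(\lambda)$ (\cite{Lu10}*{25.2.1}), so that the image of $\mathbf{B}\setminus\intI(\lambda)$ in $\intU_q(\mathfrak{n})/\intI(\lambda)$ is a basis. Applying the anti-automorphism $\phi$ (which exchanges $\intI(\lambda)$ and $\intI'(\lambda)$) and composing with $\intU_q(\mathfrak{n}_-)/\intI'(\lambda)\xrightarrow{\sim}\intV(\lambda)$, this identifies $\phi(\mathbf{B})\setminus\intI'(\lambda)$ with the lower global basis of $V(\lambda)$.

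Next I would appeal to Kashiwara's theorem that each extremal weight vector $v_{w\lambda}$ is itself a lower global basis element. Since $V(\lambda)_{w\lambda}$ is one-dimensional, this pins down a unique $b\in\mathbf{B}_{\lambda-w\lambda}\setminus\intI(\lambda)$, namely the element whose image in $V(\lambda)$ under the above identification is $v_{w\lambda}$. For the explicit representative I would invoke the quantum Verma identity (\cite{Lu10}*{Proposition 39.3.7}): for the reduced expression $\underline{w}=(i_r,\dots,i_1)$ with exponents $a_t$ as stated, one obtains
\[
v_{w\lambda}=f_{i_r}^{(a_r)}\cdots f_{i_1}^{(a_1)}v_\lambda.
\]
Hence $f_{i_r}^{(a_r)}\cdots f_{i_1}^{(a_1)}\equiv\phi(b)\pmod{\intI'(\lambda)}$, and applying the anti-automorphism $\phi$ (which reverses the order of products and swaps $e$'s with $f$'s) converts this into the claimed congruence $b\equiv e_{i_1}^{(a_1)}\cdots e_{i_r}^{(a_r)}\pmod{\intI(\lambda)}$.

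Finally, for the duality statement, $D(w\lambda,\lambda)$ is supported on $U_q(\mathfrak{n})_{\lambda-w\lambda}$ by weight considerations, and vanishes on $\intI(\lambda)$ by \eqref{eq:pair}. It therefore suffices to evaluate $D(w\lambda,\lambda)(b)$; substituting the representative from the previous step and using the $\phi$-adjointness of $(\cdot,\cdot)_\lambda$ gives $D(w\lambda,\lambda)(b)=(v_{w\lambda},v_{w\lambda})_\lambda$, which equals $1$ by the standard normalisation of extremal vectors (a direct $U_q(\mathfrak{sl}_2)$-reduction along a reduced expression produces $(v_\lambda,v_\lambda)_\lambda=1$). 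The main obstacle is the identification of $v_{w\lambda}$ with a lower global basis element, which is the essential crystal-theoretic input; everything else reduces to weight considerations and formal manipulations with the Shapovalov form.
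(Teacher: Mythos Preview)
Your proposal is correct and aligns with the paper's treatment: the paper does not supply its own proof of this lemma but simply records it as a special case of \cite{GLS}*{Proposition 6.3}, whose argument is precisely the crystal-theoretic one you outline (compatibility of $\mathbf{B}$ with $\intI(\lambda)$, identification of extremal vectors with global basis elements, and the quantum Verma identity). Your computation $D(w\lambda,\lambda)(b)=(v_{w\lambda},v_{w\lambda})_\lambda=1$ via the $\mathfrak{sl}_2$-reduction $e_i^{(a)}f_i^{(a)}v=v$ on a weight-$a$ highest weight vector is the standard verification, and the passage from $\mathbf{B}$ to the lower global basis via $\phi$ uses implicitly that $\sigma(\mathbf{B})=\mathbf{B}$ (Lusztig \cite{Lu10}*{14.4.3}), which you may wish to cite explicitly.
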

	
	\subsection{Quantum cluster algebra structures}\label{sec:qcs}
	
	We describe the quantum cluster algebra structure on the quantized coordinate rings in this subsection. To do that we firstly need to extend coefficients to $\mathbb{Z}[q^{\pm1/2}]$. Here $q^{1/2}$ is a square root of $q$ in the algebraic closure of $\mathbb{Q}(q)$. Set $$\intA_{q^{1/2}}(\mathfrak{n})=\mathbb{Z}[q^{\pm1/2}]\otimes_{\mathbb{Z}[q^{\pm1}]}\intA_q(\mathfrak{n})\quad \text{and}\quad \intU_{q^{1/2}}(\mathfrak{n})=\mathbb{Z}[q^{\pm1/2}]\otimes_{\mathbb{Z}[q^{\pm1}]}\intU_q(\mathfrak{n}).$$ We always view $\intA_q(\mathfrak{n})$ (resp., $\intU_q(\mathfrak{n})$) as a subset of $\intA_{q^{1/2}}(\mathfrak{n})$ (resp., $\intU_{q^{1/2}}(\mathfrak{n})$).
	
	Take any $w\in W$. Set $$\intA_{q^{1/2}}(\mathfrak{n}(w))=\mathbb{Z}[q^{\pm1/2}]\otimes_{\mathbb{Z}[q^{\pm1}]}\intA_q(\mathfrak{n}(w))\subset \intA_{q^{1/2}}(\mathfrak{n}).$$
	Since $\intA_{q^{1/2}}(\mathfrak{n}(w))$ is an Ore domain, one can embed $\intA_{q^{1/2}}(\mathfrak{n}(w))$ into its skew field of fractions, which we denote by $F_{q}(\mathfrak{n}(w))$ (\cite{GLS}*{\S 7.5}).
	
	Fix a reduced expression $\underline{w}=(i_r,\dots,i_1)$ of $w$. For $1\leq t\leq r$, define 
	\begin{equation}\label{eq:defl}
	\lambda_t=s_{i_1}\dots s_{i_t}(\varpi_{i_t})\qquad\text{and}\qquad  D_t=D(\lambda_t,\varpi_{i_t}).
	\end{equation}
	Then $D_t$ ($1\leq t\leq r$) belongs to $\intA_{q^{1/2}}(\mathfrak{n}(w))$ (\cite{GLS}*{Corollary 12.4} \cite{GY}*{Theorem 7.3}).
	
	Let $J=\{1,\dots,r\}$. Set 
	\[
	J_\fz=\{k\in J\mid i_s\neq i_k,\forall s>k\}\qquad\text{and}\qquad J_\ex=J\backslash J_\fz.
	\]
	Note that for $k\in J_\fz$ one has $\lambda_t=w^{-1}(\varpi_{i_t})$.
	
	By \cite{BZ05}*{Theorem 10.1}, we can take an integer-valued skew-symmetric matrix $\Lambda=(\lambda_{tk})_{t,k\in J}$ such that 
	\[
	D_tD_k=q^{\lambda_{tk}}D_tD_k.
	\]
	
	
	For $t\in J$, set
	\begin{align*}
	t_+&=\text{min}(\{k\mid t<k\leq r,i_k=i_t\}\cup\{r+1\}),\\
	t_-&=\text{max}(\{k\mid 1\leq k<t,i_t=i_k\}\cup\{0\}).
	\end{align*}
	
	Following \cite{GY}*{Proposition 7.2}, define the integer-valued $J\times J_\ex$-matrix $\tB_{\underline{w}}$, 
	\begin{equation*}
	(\tB_{\underline{w}})_{tk}=\left\{\begin{array}{ll}
	1, & \text{if $t=k_-$,} \\
	-1, & \text{if $t=k_+$,} \\
	a_{{i_t}i_k}, & \text{if $t<k<t_+<k_+$,} \\
	-a_{i_ti_k}, & \text{if $k<t<k_+<t_+$,} \\
	0, & \text{otherwise.}
	\end{array}\right.
	\end{equation*}
	
	By \emph{loc.cot.}, we have:
	
	\emph{(a) The pair $(\Lambda,\tB_{\underline{w}})$ is a compatible pair with the skew-symmetrizer $D$, where the $J_\ex\times J_\ex$ part of $D$ is the diagonal matrix with diagonal entries $d_k=2\dt_{i_k}$ ($k\in J_\ex$).}
	
	\begin{theorem}{(\cites{KKKO17,GY})}
		The triple $\cL_{\underline{w}}=(\mathbf{x}=(q^{\gamma_t}D_t)_{t\in J},\Lambda,\tB_{\underline{w}})$ is a quantum seed in $F_q(\mathfrak{n}(w))$, so it defines a quantum cluster algebra $\cA_q(\cL_{\underline{w}})$. One has $$\cA_q(\cL_{\underline{w}})=\intA_{q^{1/2}}(\mathfrak{n}(w)).$$
		Here $\gamma_t\in\mathbb{Z}/2$ ($t\in J$). 
	\end{theorem}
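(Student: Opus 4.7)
The plan is to verify the hypotheses and invoke the combined results of \cite{KKKO17} and \cite{GY}, as the attribution suggests; most of the structural data has already been set up in the excerpt, leaving only a few items to check.

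For the quantum seed property, $\Lambda$-commutativity of $\mathbf{x}$ is immediate from the definition of $\lambda_{tk}$, since the scalar twist by $q^{\gamma_t}$ does not affect commutation relations. Algebraic independence I would verify by expanding each $D_t$ in the dual PBW basis $\{e^*_{\underline{w}}(\mathbf{n})\}$ introduced in \S\ref{sec:PBW}: each $D_t$ has a distinguished leading term in a suitable bi-lexicographic order, so that the leading terms of $\mathbf{x}^{\mathbf{a}}$ for distinct $\mathbf{a}\in\mathbb{Z}^J$ are pairwise different, yielding linear independence in $F_q(\mathfrak{n}(w))$. The compatibility of $(\Lambda,\tB_{\underline{w}})$ asserted as item (a) is an entrywise computation combining the explicit combinatorial formula for $\tB_{\underline{w}}$ with the commutation relation $D_tD_k=q^{\lambda_{tk}}D_kD_t$; this is carried out in \cite{GY}*{Proposition 7.2}, producing the skew-symmetrizer $D$ with diagonal entries $2\dt_{i_k}$.

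The substantive step is the identification $\cA_q(\cL_{\underline{w}}) = \intA_{q^{1/2}}(\mathfrak{n}(w))$. The inclusion $\subseteq$ follows because the initial cluster variables $q^{\gamma_t}D_t$ lie in $\intA_{q^{1/2}}(\mathfrak{n}(w))$ and each mutation of a seed inside $\intA_{q^{1/2}}(\mathfrak{n}(w))$ stays inside, via a quantum T-system/determinantal identity on unipotent quantum minors (already present in \cite{GLS}). The reverse inclusion $\supseteq$ is the deep part and the main obstacle: it is established in \cite{KKKO17} via monoidal categorification. Concretely, $\intA_{q^{1/2}}(\mathfrak{n}(w))$ is identified with the Grothendieck ring of a certain monoidal subcategory of modules over a symmetric quiver Hecke algebra; the minors $D_t$ correspond to a distinguished family of real simple modules, and every other real simple module is produced from these via iterated head-products against appropriate mutation sequences. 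Because real simple modules are matched with dual canonical basis elements up to scalars, every such basis element of $\intA_{q^{1/2}}(\mathfrak{n}(w))$ is forced into $\cA_q(\cL_{\underline{w}})$, giving the equality and completing the proof.
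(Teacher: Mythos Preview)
The paper does not supply its own proof of this theorem: it is stated with attribution to \cite{KKKO17} and \cite{GY} and then used as a black box. So there is nothing in the paper to compare your argument against line by line.

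That said, your sketch has a genuine gap in generality. The monoidal categorification route via quiver Hecke (KLR) algebras that you invoke from \cite{KKKO17} is only available when the generalized Cartan matrix $A$ is \emph{symmetric}; in non-symmetric symmetrizable type the KLR Grothendieck ring does not match $\intA_{q^{1/2}}(\mathfrak{n}(w))$ in the required way. The present paper works throughout with an arbitrary symmetrizable Kac--Moody datum, and the statement is moreover about the \emph{integral} form over $\mathbb{Z}[q^{\pm1/2}]$, not merely over $\mathbb{Q}(q^{1/2})$. For both of these reasons the reference to \cite{GY} is essential: their argument is ring-theoretic (iterated Ore extensions of CGL type, toric frames, and explicit mutation identities among unipotent minors) and delivers the equality $\cA_q(\cL_{\underline{w}})=\intA_{q^{1/2}}(\mathfrak{n}(w))$ in full symmetrizable generality and integrally. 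Your outline, as written, would only cover the symmetric case over the field.

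A smaller point: your description of the reverse inclusion in the symmetric case is not quite how \cite{KKKO17} proceeds. They do not produce ``every real simple module'' by mutation; rather, one shows that the dual PBW generators $e^*_{\underline{w}}(\be_t)$ (equivalently, suitable unipotent minors) are themselves cluster variables obtained from the initial seed by an explicit mutation sequence, and since these generate $\intA_{q^{1/2}}(\mathfrak{n}(w))$ as an algebra the inclusion follows. The statement that all cluster monomials lie in the dual canonical basis is a separate (and deeper) output of the categorification, not the mechanism for the equality here.
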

	
	\subsection{Change of rings}\label{sec:bach}
	
	Let $R$ be any (unital) commutative $\mathbb{Z}[q^{\pm1/2}]$-algebra. Set $$\intU_R(\mathfrak{n})=R\otimes_{\mathbb{Z}[q^{\pm1/2}]}\intU_{q^{1/2}}(\mathfrak{n})\qquad \text{and}\qquad \intA_R(\mathfrak{n})=R\otimes_{\mathbb{Z}[q^{\pm1/2}]}\intA_{q^{1/2}}(\mathfrak{n}).$$  
	
	It is clear that $\intU_R(\mathfrak{n})$ has a $Q_+$-grading and $\intA_R(\mathfrak{n})$ can be identified with the graded dual, that is,
	\[
	\intA_R(\mathfrak{n})\cong \bigoplus_{\gamma\in Q_+}\text{Hom}_R(\intU_R(\mathfrak{n})_\gamma,R).
	\]
	
	Take $w\in W$ and define 
	$$ \intA_R(\mathfrak{n}(w))=R\otimes_{\mathbb{Z}[q^{\pm1/2}]}\intA_{q^{1/2}}(\mathfrak{n}(w)).$$
	
	Since $\intA_{q}(\mathfrak{n}(w))$ is spanned by a subset of dual canonical basis (\cite{Ki}*{Theorem 4.25}), the quotient $\intA_{q^{1/2}}(\mathfrak{n})/\intA_{q^{1/2}}(\mathfrak{n}(w))$ is a free $\mathbb{Z}[q^{\pm1/2}]$-module. Hence we can identify $\intA_R(\mathfrak{n}(w))$ as an $R$-subalgebra of $\intA_R(\mathfrak{n})$. 
	
	\subsection{Quantum Frobenius morphisms and their splittings}\label{sec:qfs}
	
	We recall the construction of quantum Frobenius morphisms and their splittings in this subsection.
	
	Let $l$ be a positive odd integer which is coprime to $\dt_i$, for $i\in I$. Let $\varepsilon$ be a primitive $l$-th root of unity. Define $\mathbb{Z}[q^{\pm1/2}]$-algebra $R_1$, $R_\varepsilon$ as following. We set $R_1=R_\varepsilon=\mathbb{Z}[\varepsilon]$ as rings, and the $\mathbb{Z}[q^{\pm1/2}]$-algebra structure on $R_1$ (resp., $R_\varepsilon$) is given by $q^{1/2}\mapsto 1$ (resp., $q^{1/2}\mapsto \varepsilon^{(l+1)/2}$). 
	
	Recall \S \ref{sec:bach}. We write $\intU_1(\mathfrak{n})=\intU_{R_1}(\mathfrak{n})$, $\intU_\varepsilon(\mathfrak{n})=\intU_{R_\varepsilon  }(\mathfrak{n})$, $\intA_1(\mathfrak{n})=\intA_{R_1}(\mathfrak{n})$ and $\intA_\varepsilon(\mathfrak{n})=\intA_{R_\varepsilon}(\mathfrak{n})$ for simplicity.
	
	For any element $x\in \intA_{q^{1/2}}(\mathfrak{n})$, write $$_\varepsilon x=1\otimes_{\mathbb{Z}[q^{\pm1/2}]}x\in\intA_\varepsilon(\mathfrak{n})\qquad\text{and}\qquad _1x=1\otimes_{\mathbb{Z}[q^{\pm1/2}]} x\in\intA_1(\mathfrak{n}).$$
	
	Similarly, for any $y\in \intU_{q^{1/2}}(\mathfrak{n})$, write $$_\varepsilon y=1\otimes_{\mathbb{Z}[q^{\pm1/2}]}y\in\intU_\varepsilon(\mathfrak{n})\qquad\text{and}\qquad _1y=1\otimes_{\mathbb{Z}[q^{\pm1/2}]} y\in\intU_1(\mathfrak{n}).$$
	
	The first part of the following theorem is originally due to Lusztig \cite{Lu90 } \cite{Lu10}*{35.1.7} with certain additional assumptions on $l$, and McGerty \cite{Mc} gave another construction using Hall algebras where the additional assumptions were removed. The second part is due to Lusztig \cite{Lu10}*{35.1.8}.
	\begin{theorem}
		(a) There exists a unique $\mathbb{Z}[\varepsilon]$-algebra homomorphism
		\[
		\ofr:\intU_\varepsilon(\mathfrak{n})\longrightarrow\intU_1(\mathfrak{n})
		\]
		such that $\ofr({_\varepsilon e_i^{(n)}})$ equals $_1e_i^{(n/l)}$ if $l$ divides $n$, and equals 0 if otherwise, for $i\in I$ and $n\in\mathbb{N}$.
		
		(b) There exists a unique $\mathbb{Z}[\varepsilon]$-algebra homomorphism
		\[
		\fr:\intU_1(\mathfrak{n})\longrightarrow\intU_\varepsilon(\mathfrak{n})
		\]
		such that $\fr({_1e_i^{(n)}})={_\varepsilon e_i^{(nl)}}$, for $i\in I$ and $n\in\mathbb{N}$.
	\end{theorem}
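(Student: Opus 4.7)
The plan is to establish both parts by presenting $\intU_\varepsilon(\mathfrak{n})$ and $\intU_1(\mathfrak{n})$ via the standard generators-and-relations description of the Lusztig integral form by divided powers, and then checking that the proposed assignments respect the defining relations after specialization at $q^{1/2}=\varepsilon^{(l+1)/2}$, respectively $q^{1/2}=1$. Uniqueness is automatic in either case: because $\intU_q(\mathfrak{n})$ is generated as a $\mathbb{Z}[q^{\pm1}]$-algebra by the divided powers $e_i^{(n)}$, both $\intU_\varepsilon(\mathfrak{n})$ and $\intU_1(\mathfrak{n})$ are generated by ${_\varepsilon e_i^{(n)}}$ (resp.\ by ${_1 e_i^{(n)}}$) over $\mathbb{Z}[\varepsilon]$, so any algebra homomorphism out of them is determined by its values on these generators.

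The relations in the integral form that must be checked come in two families: (i) the single-variable product formula $e_i^{(a)}e_i^{(b)}=\qbinom{a+b}{a}_i e_i^{(a+b)}$, and (ii) the divided-power form of the quantum Serre relations. Family (i) is handled uniformly by the quantum Lucas theorem for $q$-binomials at a primitive $l$-th root of unity, which, using $\gcd(l,\dt_i)=1$, yields
\[
\qbinom{al+bl}{al}_{i}\Big|_{q=\varepsilon}=\binom{a+b}{a}\qquad\text{and}\qquad \qbinom{n}{s}_{i}\Big|_{q=\varepsilon}=0\ \text{whenever $l\mid n$ and $l\nmid s$.}
\]
The first identity produces the required compatibility on the divisible divided powers for both $\ofr$ and $\fr$, while the second makes the vanishing rule $\ofr({_\varepsilon e_i^{(n)}})=0$ (for $l\nmid n$) consistent with multiplication: any such $_\varepsilon e_i^{(n)}$ only appears in products whose structural coefficients already vanish at $q=\varepsilon$.

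The main obstacle is family (ii). For $\ofr$ I would apply the map term-by-term to the divided-power Serre relation at $q=\varepsilon$; the vanishing of most $q$-binomials and the reindexing of the surviving summands (again via Lucas) collapse the expression to the classical Serre relation, which holds in $\intU_1(\mathfrak{n})$. For $\fr$ the situation is genuinely deeper: one must show that the elements $_\varepsilon e_i^{(l)}$ satisfy the \emph{higher Serre identity}
\[
\sum_{s=0}^{1-a_{ij}}(-1)^s\binom{1-a_{ij}}{s}\,\bigl({_\varepsilon e_i^{(l)}}\bigr)^{1-a_{ij}-s}\,{_\varepsilon e_j^{(l)}}\,\bigl({_\varepsilon e_i^{(l)}}\bigr)^s=0
\]
in $\intU_\varepsilon(\mathfrak{n})$. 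This does not follow formally from the ordinary quantum Serre relations, and it is the heart of the argument. I would establish it either by Lusztig's direct approach through commutation identities between powers of $e_i$ and $e_j$ whose coefficients are $q$-binomials that collapse via Lucas at $q=\varepsilon$, or alternatively via McGerty's Hall-algebra realization in which $e_i^{(l)}$ represents the class of a semisimple module and the identity is structural. Once this higher Serre identity is in hand, the remaining multiplicative and Serre checks for $\fr$ follow from the Lucas calculations of family (i) above.
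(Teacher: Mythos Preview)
The paper does not prove this theorem at all: it is stated as a cited result, with part (a) attributed to Lusztig \cite{Lu90}, \cite{Lu10}*{35.1.7} (under extra hypotheses on $l$) and to McGerty \cite{Mc} (in general), and part (b) to Lusztig \cite{Lu10}*{35.1.8}. There is therefore no ``paper's own proof'' to compare against; the theorem functions here purely as background input for the subsequent constructions of $\dofr$ and $\dfr$.

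Your outline is a faithful sketch of how those cited proofs actually proceed, and you correctly locate the genuine difficulty in the higher Serre identity needed for part (b). One point deserves more care: you begin by ``presenting $\intU_\varepsilon(\mathfrak{n})$ and $\intU_1(\mathfrak{n})$ via the standard generators-and-relations description,'' but such a presentation of the Lusztig form by divided powers and the two relation families you list is itself a nontrivial theorem (this is essentially what Lusztig establishes in \cite{Lu10}*{Chapter 35}, and is where his original extra assumptions on $l$ entered). Without it, checking that the assignment respects those relations does not yet yield a well-defined map on the specialized integral form. Your treatment of part (a) also slightly understates the work: verifying that $\ofr$ kills the quantum Serre relations at $\varepsilon$ requires knowing the full relation set of $\intU_\varepsilon(\mathfrak{n})$, not just the ones inherited from the generic $q$ case. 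McGerty's Hall-algebra argument sidesteps exactly this presentation issue, which is why it removes the restrictions on $l$; if you intend a self-contained proof, you should either invoke that or make the presentation step explicit.
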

	
	Now we pass to the dual side. Define
	\[
	\dofr:\intA_1(\mathfrak{n})\longrightarrow \intA_\varepsilon(\mathfrak{n})\qquad\text{and}\qquad\dfr:\intA_\varepsilon(\mathfrak{n})\longrightarrow\intA_1(\mathfrak{n}),
	\]
	by $$\dofr(f)(x)=f(\ofr(x)) \qquad \text{for }  f \in\intA_1(\mathfrak{n}), x\in\intU_\varepsilon(\mathfrak{n}),$$ and $$\dfr(g)(y)=g(\fr(y)) \qquad \text{for } g\in\intA_\varepsilon(\mathfrak{n}),y\in \intU_1(\mathfrak{n}).$$ These two maps are well-defined $\mathbb{Z}[\varepsilon]$-linear maps by weight consideration.
	
	It follows immediately from definitions that these two maps are compatible with $Q_+$-grading, that is,
	\begin{equation}\label{eq:homo}
	\dofr\big(\intA_1(\mathfrak{n})_\mu\big)\subset \intA_\varepsilon(\mathfrak{n})_{l\mu}\qquad\text{and}\qquad \dfr\big(\intA_\varepsilon(\mathfrak{n})_\mu\big)\subset \intA_1(\mathfrak{n})_{\mu/l},
	\end{equation}
	for any $\mu\in Q_+$. Here $\intA_1(\mathfrak{n})_{\mu/l}$ is understood as zero space if $\mu\not\in lQ_+$.
	
	\begin{prop}\label{prop:Frli}
		The map $\dofr$ is a $\mathbb{Z}[\varepsilon]$-algebra homomorphism. The map $\dfr$ satisfies $$\dfr\circ\dofr=id\qquad \text{and}\qquad \dfr\big(\dofr(f)g\big)=f\dfr(g),$$ for $f\in \intA_1(\mathfrak{n})$ and $g\in\intA_\varepsilon(\mathfrak{n})$.
	\end{prop}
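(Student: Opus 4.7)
All three parts will be proved by dualizing appropriate compatibility identities among $\ofr$, $\fr$ and the coproduct $r$ on $\intU_{q^{1/2}}(\mathfrak{n})$. Write $r_1$ and $r_\varepsilon$ for the two specialisations of $r$. The recurring technical point is that $\ofr$ vanishes on every weight component not in $lQ_+$, combined with $\varepsilon^l=1$: together these force the $\varepsilon$-twist factor $\varepsilon^{-(\wt x_2,\wt y_1)}$ appearing in the twisted multiplication on $\intU_\varepsilon(\mathfrak{n})\otimes\intU_\varepsilon(\mathfrak{n})$ to collapse to $1$ on all the images that concern us.

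For (1), the statement that $\dofr$ is an algebra homomorphism is equivalent by duality to $\ofr$ being a coalgebra homomorphism, that is, $(\ofr\otimes\ofr)\circ r_\varepsilon=r_1\circ\ofr$. The collapsing point above implies that both sides are $\mathbb{Z}[\varepsilon]$-algebra homomorphisms from $\intU_\varepsilon(\mathfrak{n})$ to $\intU_1(\mathfrak{n})\otimes\intU_1(\mathfrak{n})$ with the untwisted multiplication on the target, so it is enough to check agreement on the generators $e_i^{(n)}$. Using \eqref{eq:cop} together with $\varepsilon^l=1$, both sides vanish when $l\nmid n$ and equal $\sum_{t=0}^{n/l}e_i^{(t)}\otimes e_i^{(n/l-t)}$ otherwise. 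Claim (2) is then immediate from $\ofr\circ\fr=\mathrm{id}$: for $x\in\intU_1(\mathfrak{n})$,
\[
\dfr(\dofr(f))(x)=\dofr(f)(\fr(x))=f\bigl(\ofr(\fr(x))\bigr)=f(x).
\]

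For (3), I would reduce the projection formula to the identity
\[
(\ofr\otimes\mathrm{id})\circ r_\varepsilon\circ\fr \;=\; (\mathrm{id}\otimes\fr)\circ r_1,
\]
viewed as $\mathbb{Z}[\varepsilon]$-linear maps $\intU_1(\mathfrak{n})\to\intU_1(\mathfrak{n})\otimes_{\mathbb{Z}[\varepsilon]}\intU_\varepsilon(\mathfrak{n})$. The reduction is by unfolding $\dfr(\dofr(f)g)(y)$ and $(f\,\dfr(g))(y)$ via the dual-algebra formulas and pairing against $f\otimes g$. By the same collapsing device, both sides of the displayed identity are algebra homomorphisms into the untwisted target, and on the generator $e_i^{(n)}$ both sides evaluate to $\sum_{t=0}^{n}e_i^{(t)}\otimes e_i^{(l(n-t))}$ by a direct computation using \eqref{eq:cop} and $\fr(e_i^{(m)})=e_i^{(lm)}$. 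The main obstacle throughout is precisely the bookkeeping of the $\varepsilon$-twist factors on the several tensor products involved; once one verifies carefully that they really do collapse on the relevant images — which is where $l$-divisibility of the surviving weights is essential — all three parts reduce uniformly to a check on the generating divided powers.
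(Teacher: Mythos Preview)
Your proposal is correct and follows essentially the same route as the paper: both dualize to the identities $(\ofr\otimes\ofr)\circ r_\varepsilon=r_1\circ\ofr$ and $(\ofr\otimes\mathrm{id})\circ r_\varepsilon\circ\fr=(\mathrm{id}\otimes\fr)\circ r_1$, observe that the $\varepsilon$-twist collapses on the relevant images because $\ofr$ kills weights outside $lQ_+$, conclude that the maps in question are algebra homomorphisms into the (effectively) untwisted tensor product, and then verify equality on divided powers via \eqref{eq:cop}. The paper phrases the first collapsing step as ``$\ofr\otimes\ofr$ is an algebra homomorphism for the twisted structures'' rather than passing to the untwisted target, but since the twist on $\intU_1(\mathfrak{n})\otimes\intU_1(\mathfrak{n})$ is already trivial these are the same statement.
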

	
	\begin{proof}
		Let $r_\varepsilon:\intU_\varepsilon(\mathfrak{n})\rightarrow \intU_\varepsilon(\mathfrak{n})\otimes \intU_\varepsilon(\mathfrak{n})$ and  $r_1:\intU_1(\mathfrak{n})\rightarrow \intU_1(\mathfrak{n})\otimes \intU_1(\mathfrak{n})$ be the base changes of the coproduct $r$, where the tensor products are over $\mathbb{Z}[\varepsilon]$. The twisted algebra structure naturally extends to $\intU_\varepsilon(\mathfrak{n})\otimes\intU_\varepsilon(\mathfrak{n})$ and $\intU_1(\mathfrak{n})\otimes\intU_1(\mathfrak{n})$. Then $r_\varepsilon$ and $r_1$ are both algebra homomorphisms.
		
		By definition, the map $\dofr$ is an algebra homomorphism if and only if we have 
		\begin{equation}\label{eq:frt}
		(\ofr\otimes\ofr)\circ r_\varepsilon=r_1\circ \ofr
		\end{equation}
		as maps from $\intU_\varepsilon(\mathfrak{n})$ to $\intU_1(\mathfrak{n})\otimes\intU_1(\mathfrak{n})$. Thanks to \eqref{eq:cop} it is direct to check \eqref{eq:frt} when acting on divided powers. Since maps $\ofr$, $r_1$ and $r_\varepsilon$ are all algebra homomorphisms, it will suffice to show $\ofr\otimes\ofr$ is also an algebra homomorphism (with respect to the twisted product structures on tensor products). 
		
		Take $x_1$, $x_2$, $y_1$ and $y_2$ to be any homogeneous elements in $\intU_\varepsilon(\mathfrak{n})$. Then
		\begin{align*}
		(\ofr\otimes\ofr)\big((x_1\otimes x_2)\cdot (y_1\otimes y_2)\big)&=\varepsilon^{-(\wt(x_2),\wt(y_1))}\ofr(x_1y_1)\otimes\ofr(x_2y_2)\\
		&\stackrel{(\heartsuit )}{=}\ofr(x_1y_1)\otimes\ofr(x_2y_2)\\&=(\ofr\otimes\ofr)(x_1\otimes x_2)\cdot(\ofr\otimes\ofr)(y_1\otimes y_2).
		\end{align*}
		The equality $(\heartsuit)$ follows by noticing that both sides are zero unless $\wt(x_2)$ and $\wt(y_1)$ both belong to $lQ_+$. Therefore equality \eqref{eq:frt} holds and $\dofr$ is an algebra homomorphism.
		
		We prove the second statement. Since $\ofr\circ\fr=id$, we have $\dfr\circ\dofr=id$. By definition, the last equality holds if and only if the following diagram commutes \begin{equation}\label{dia:prfr}
		\begin{tikzcd}
		& \intU_1(\mathfrak{n}) \arrow[r,"\fr"] \arrow[d,"r_1"'] & \intU_\varepsilon(\mathfrak{n})\arrow[r,"r_\varepsilon"] &\intU_\varepsilon(\mathfrak{n})\otimes\intU_\varepsilon(\mathfrak{n}) \arrow[d,"\ofr\otimes id"]\\& \intU_1(\mathfrak{n})\otimes\intU_1(\mathfrak{n})\arrow[rr,"id\otimes\fr"'] & & \intU_1(\mathfrak{n})\otimes\intU_\varepsilon(\mathfrak{n}).
		\end{tikzcd}
		\end{equation}
		Here tensor products are over $\mathbb{Z}[\varepsilon]$. We endow $\intU_1(\mathfrak{n})\otimes\intU_\varepsilon(\mathfrak{n})$ with the natural algebra structure (without twisting), and claim that all the maps in the diagram \eqref{dia:prfr} are algebra homomorphisms. We only prove this claim for the map $\ofr\otimes id$. The proof for other maps is trivial.
		
		Take homogeneous elements $x_i$, $y_i$ $(i=1,2)$ in $\intU_\varepsilon(\mathfrak{n)}$. Then
		\begin{align*}
		(\ofr\otimes id)\big((x_1\otimes x_2)\cdot(y_1\otimes y_2)\big)&=(\ofr\otimes id)(\varepsilon^{-(\wt(x_2),\wt(y_1))}x_1y_1\otimes x_2y_2)\\&=\ofr(x_1y_1)\otimes(x_2y_2)\\&=(\ofr\otimes id)(x_1\otimes x_2)\cdot(\ofr\otimes id)(y_1\otimes y_2),
		\end{align*}
		where the second equality follows by noticing both sides are zero unless $\wt(y_1)\in lQ_+$.
		
		It then remains to check the diagram \eqref{dia:prfr} when acting on divided powers, which is direct and will be skipped. 
	\end{proof}
	
	It follows from the above proposition that we can endow $\intA_\varepsilon(\mathfrak{n})$ with the $\intA_1(\mathfrak{n})$-module structure via the map $\dofr$. Then $\dfr$ is a splitting of the map $\dofr$ as $\intA_1(\mathfrak{n})$-module homomorphisms.
	
	\subsection{Proof of the main theorem}\label{sec:proof}
	
	We prove Theorem \ref{thm:main} in this subsection. Retain the same notation as in \S \ref{sec:bach} and \S \ref{sec:qfs}. 
	
	\begin{lemma}\label{le:qeo}
		Suppose $R$ is an integral domain and $w$ is a Weyl group element. Then we have:
		
		(a) the $R$-algebra $\intA_R(\mathfrak{n}(w))$ is an integral domain;
		
		(b) for any $f\in\intA_R(\mathfrak{n})$, if $g\cdot f\in\intA_R(\mathfrak{n}(w))$ for some nonzero element $g$ in $\intA_R(\mathfrak{n}(w))$, then $f\in\intA_R(\mathfrak{n}(w))$;
		
		(c) for any $f\in\intA_R(\mathfrak{n})$, if $g\cdot f=0$ for some nonzero element $g$ in $\intA_R(\mathfrak{n}(w))$, then $f=0$.
	\end{lemma}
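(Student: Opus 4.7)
The strategy is to prove (a) as the technical core and then deduce (b) and (c) from (a) together with the multiplication isomorphism \eqref{eq:tens} base-changed to $R$.

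For (a), I would work with the dual PBW basis $\{e^*_{\underline{w}}(\mathbf{n})\}_{\mathbf{n}\in\mathbb{N}^r}$ of $\intA_{q^{1/2}}(\mathfrak{n}(w))$ from \S\ref{sec:PBW}. Fix a total order $\prec$ on $\mathbb{N}^r$ refining the coordinatewise partial order (say, any lexicographic refinement); the Levendorski--Soibelman commutation relations for PBW monomials in $\intU_q(\mathfrak{n}(w))$, transported through the algebra isomorphism $\iota$ and combined with the Kac-norm normalization $e^*_{\underline{w}}(\mathbf{n})=\iota(e_{\underline{w}}(\mathbf{n})/(e_{\underline{w}}(\mathbf{n}),e_{\underline{w}}(\mathbf{n})))$, yield a unitriangular multiplication formula
\[
e^*_{\underline{w}}(\mathbf{m})\cdot e^*_{\underline{w}}(\mathbf{n}) = q^{c(\mathbf{m},\mathbf{n})/2}\, e^*_{\underline{w}}(\mathbf{m}+\mathbf{n}) + \sum_{\mathbf{p}\prec \mathbf{m}+\mathbf{n}}c_{\mathbf{p}}\, e^*_{\underline{w}}(\mathbf{p}),
\]
with $c(\mathbf{m},\mathbf{n})\in\mathbb{Z}$ and $c_\mathbf{p}\in\mathbb{Z}[q^{\pm 1/2}]$; the leading coefficient $q^{c(\mathbf{m},\mathbf{n})/2}$ is a unit. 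The associated graded of $\intA_{q^{1/2}}(\mathfrak{n}(w))$ with respect to the filtration induced by $\prec$ is therefore a quantum affine space over $\mathbb{Z}[q^{\pm 1/2}]$. Since both the basis and the shape of the leading terms are preserved under base change (and $q^{1/2}$ necessarily maps to a unit in $R$), $\mathrm{gr}\,\intA_R(\mathfrak{n}(w))$ is a quantum affine space over $R$, which is an iterated skew polynomial extension of the integral domain $R$ and hence is a domain. A filtered algebra whose associated graded is a domain is itself a domain, proving (a).

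For (b) and (c), I would apply the multiplication isomorphism \eqref{eq:tens}. Both tensor factors are free $\mathbb{Z}[q^{\pm 1}]$-modules (by the dual PBW basis and by \cite{Ki17}*{Theorem 3.3}), so base change through $\mathbb{Z}[q^{\pm 1/2}]$ to $R$ preserves the isomorphism, yielding
\[
\intA_R(\mathfrak{n}(w))\otimes_R \intA_R(\mathfrak{n}(w)')\xrightarrow{\sim}\intA_R(\mathfrak{n}).
\]
Using the $Q_+$-grading, the weight-zero piece of $\intA_R(\mathfrak{n}(w)')$ is spanned by the identity, so $1$ is unimodular and extends to an $R$-basis $\{b_\beta\}_{\beta\in B}$ with $b_{\beta_0}=1$. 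Every $f\in\intA_R(\mathfrak{n})$ decomposes uniquely as $f=\sum_\beta a_\beta b_\beta$ with $a_\beta\in\intA_R(\mathfrak{n}(w))$, and closure of $\intA_R(\mathfrak{n}(w))$ under multiplication makes $g\cdot f=\sum_\beta(ga_\beta) b_\beta$ the standard decomposition of $gf$. For (b), the hypothesis $gf\in\intA_R(\mathfrak{n}(w))=\intA_R(\mathfrak{n}(w))\cdot 1$ forces $ga_\beta=0$ for all $\beta\neq\beta_0$, so $a_\beta=0$ for $\beta\neq\beta_0$ by part (a), whence $f=a_{\beta_0}\in\intA_R(\mathfrak{n}(w))$. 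For (c), $gf=0$ gives $ga_\beta=0$ for all $\beta$, so $f=0$ by part (a).

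The main obstacle lies in (a): while the existence of some unitriangular form is essentially dual to Levendorski--Soibelman, verifying that the leading coefficient is a \emph{unit} in $\mathbb{Z}[q^{\pm 1/2}]$ (rather than a quantum binomial coefficient that could vanish after base change) requires careful bookkeeping of how the quantum binomial contributions in the PBW-side multiplication cancel against the Kac-norm denominator defining $e^*_{\underline{w}}(\mathbf{n})$.
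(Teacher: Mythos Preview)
Your proposal is correct and follows essentially the same approach as the paper: part (a) via the unitriangularity of the dual PBW multiplication (the paper cites this as the dual Levendorski\u{\i}--Soibelman formula \cite{Ki}*{Theorem 4.27}, which already gives the leading coefficient as a power of $q$ and thus resolves the obstacle you flag), and parts (b), (c) via the tensor decomposition \eqref{eq:tens}. The only cosmetic difference is that the paper first passes to the fraction field of $R$, whereas you run the filtered/associated-graded argument directly over $R$; both work.
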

	
	\begin{proof}
		Since $\intA_R(\mathfrak{n}(w))$ is a free $R$-module, it will suffice to prove the lemma over its field of fractions. We may assume $R$ is a field. 
		
		Take a reduced expression $\underline{w}=(i_r,\dots,i_1)$ and use the same notations as in \S \ref{sec:PBW}. For $\mathbf{n}, \mathbf{m}\in\mathbb{Z}^r$, it follows from the dual Levendorskii--Soibelman formula \cite{Ki}*{Theorem 4.27} that
		\begin{equation}\label{eq:prog}
		e_{\underline{w}}^*(\mathbf{n})e_{\underline{w}}^*(\mathbf{m})\in q^{\mathbb{Z}}e_{\underline{w}}^*(\mathbf{n}+\mathbf{m})+\sum_{\mathbf{n}'<\mathbf{n}+\mathbf{m}}\mathbb{Z}[q^{\pm1}]e_{\underline{w}}^*(\mathbf{n}')\qquad \text{in $\intA_q(\mathfrak{n}(w))$.}
		\end{equation}
		Here $<$ is the lexicographic order on $\mathbb{Z}^r$.
		
		Take $f$ and $g$ to be two nonzero elements in $\intA_R(\mathfrak{n}(w))$. By rescaling, we may assume that 
		\[
		f\in 1\otimes e_{\underline{w}}^*(\mathbf{n}_1)+\sum_{\mathbf{n}'<\mathbf{n}_1}R(1\otimes e_{\underline{w}}^*(\mathbf{n}')),\quad g\in 1\otimes e_{\underline{w}}^*(\mathbf{n}_2)+\sum_{\mathbf{n}'<\mathbf{n}_2}R(1\otimes e_{\underline{w}}^*(\mathbf{n}')).
		\]
		Then by \eqref{eq:prog}, we have 
		\[
		fg\in 1\otimes e_{\underline{w}}^*(\mathbf{n}_1+\mathbf{n}_2)+\sum_{\mathbf{n}'<\mathbf{n}_1+\mathbf{n}_2}R(1\otimes e_{\underline{w}}^*(\mathbf{n}')).
		\]
		In particular, we deduce that $fg$ is nonzero, so $\intA_R(\mathfrak{n}(w))$ is an integral domain. This proved the assertion (a).
		
		The assertion (b) and (c) follow from (a) and the tensor product decomposition \eqref{eq:tens}.
	\end{proof}
	
	\begin{lemma}\label{prop:vani}
		For any $f,g\in\intA_\varepsilon(\mathfrak{n})$, we have $\dfr(fg)=\dfr(gf)$.
	\end{lemma}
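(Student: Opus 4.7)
First, I would translate the identity into an intrinsic property of $r_\varepsilon\circ\fr$. Unpacking definitions, for any $f,g\in\intA_\varepsilon(\mathfrak{n})$ and $y\in\intU_1(\mathfrak{n})$ one has
\[
\dfr(fg)(y)=(f\otimes g)\bigl(r_\varepsilon(\fr(y))\bigr),\qquad \dfr(gf)(y)=(f\otimes g)\bigl(\sigma(r_\varepsilon(\fr(y)))\bigr),
\]
where $\sigma$ denotes the standard tensor swap on $\intU_\varepsilon(\mathfrak{n})\otimes_{\mathbb{Z}[\varepsilon]}\intU_\varepsilon(\mathfrak{n})$. Each weight piece of the latter is a free $\mathbb{Z}[\varepsilon]$-module and hence is separated by its dual, so proving the lemma amounts to verifying that $r_\varepsilon(\fr(y))$ is $\sigma$-symmetric for every $y\in\intU_1(\mathfrak{n})$.

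By linearity it suffices to check this on monomials in the divided powers ${_1 e_i^{(n)}}$, and I would induct on the number of factors. For the base case $y={_1 e_i^{(n)}}$, I apply \eqref{eq:cop} to $\fr(y)={_\varepsilon e_i^{(nl)}}$: the coefficient $q_i^{-s(nl-s)}$ collapses to $q_i^{s^2}$ at $q=\varepsilon$ since $q_i^l=1$ (using $\gcd(\dt_i,l)=1$), and the substitution $s\mapsto nl-s$ produces the same coefficient, yielding $\sigma$-symmetry.

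For the inductive step $y=y_1y_2$, set $A=r_\varepsilon(\fr(y_1))$ and $B=r_\varepsilon(\fr(y_2))$, both $\sigma$-symmetric by hypothesis. Since $r_\varepsilon$ is an algebra homomorphism with respect to the twisted product, $r_\varepsilon(\fr(y))=AB$ in the twisted algebra $\intU_\varepsilon(\mathfrak{n})\otimes\intU_\varepsilon(\mathfrak{n})$. The crux is that for homogeneous summands $a\otimes a'$ in $A$ and $b\otimes b'$ in $B$, the constraints $\wt(a)+\wt(a')=l\,\wt(y_1)$ and $\wt(b)+\wt(b')=l\,\wt(y_2)$ imply
\[
(\wt(a'),\wt(b))-(\wt(a),\wt(b'))\in l\mathbb{Z},
\]
so the two twisting scalars $\varepsilon^{-(\wt(a'),\wt(b))}$ and $\varepsilon^{-(\wt(a),\wt(b'))}$ coincide. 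A direct comparison of $\sigma(AB)$ with $\sigma(A)\sigma(B)$ through the twisted product formula then gives $\sigma(AB)=\sigma(A)\sigma(B)=AB$. The decisive point — and the only place where the specific structure of $\fr$ is really used — is this weight-matching observation, which crucially depends on the $l$-divisibility of the total weights in the image of $\fr$; without it the twisting scalars would obstruct passage through $\sigma$.
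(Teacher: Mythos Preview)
Your proof is correct and follows essentially the same route as the paper. Both arguments reduce to showing that $r_\varepsilon\circ\fr$ takes values in the $\sigma$-symmetric part of $\intU_\varepsilon(\mathfrak{n})\otimes\intU_\varepsilon(\mathfrak{n})$, and both hinge on the same weight computation: when the total weights of the tensor factors lie in $lQ_+$, the two twisting scalars $\varepsilon^{-(\wt(a'),\wt(b))}$ and $\varepsilon^{-(\wt(a),\wt(b'))}$ agree, so the swap $\sigma$ is multiplicative on such elements. The only cosmetic difference is packaging: the paper isolates the subalgebra $M\subset\intU_\varepsilon(\mathfrak{n})\otimes\intU_\varepsilon(\mathfrak{n})$ of tensors with total weight in $lQ_+$, observes that $\sigma|_M$ is an algebra homomorphism, and then checks the identity on divided powers; you run the same content as an explicit induction on the number of divided-power factors.
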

	
	\begin{proof}   
		Let $r_\varepsilon=\intU_\varepsilon(\mathfrak{n})\rightarrow \intU_\varepsilon(\mathfrak{n})\otimes \intU_\varepsilon(\mathfrak{n})$ be the base change of the coproduct $r$ as before, and $s:\intU_\varepsilon(\mathfrak{n})\otimes \intU_\varepsilon(\mathfrak{n})\rightarrow\intU_\varepsilon(\mathfrak{n})\otimes \intU_\varepsilon(\mathfrak{n})$ be the linear map given by $s(x\otimes y)=y\otimes x$. Here the tensor products are all over $\mathbb{Z}[\varepsilon]$.
		
		By definition it will suffice to prove that 
		\begin{equation}\label{eq:rs}
		r_\varepsilon\circ\fr(x)=s\circ r_\varepsilon\circ \fr(x),
		\end{equation}
		for any $x\in \intU_1(\mathfrak{n})$. 
		
		Let $M$ be the $\mathbb{Z}[\varepsilon]$-subspace of $\intU_\varepsilon(\mathfrak{n})\otimes\intU_\varepsilon(\mathfrak{n})$ spanned by elements of the form $x\otimes y$, where $x$ and $y$ are homogeneous elements in $\intU_\varepsilon(\mathfrak{n})$ such that the summation $\wt(x)+\wt(y)$ belongs to $lQ_+$. It is clear that the subspace $M$ is moreover a subalgebra and $s(M)=M$.
		
		Notice that for any homogeneous elements $x_1$, $x_2$, $y_1$ and $y_2$ in $\intU_\varepsilon(\mathfrak{n})$ with $\wt(x_1)+\wt(x_2)\in lQ_+$ and $\wt(y_1)+\wt(y_2)\in lQ_+$, we have
		\begin{align*}
		&s((x_1\otimes x_2)\cdot (y_1\otimes 
		y_2))=\varepsilon^{-(\wt(x_2),\wt(y_1))}x_2y_2\otimes x_1y_1\\&=\varepsilon^{-(\wt(x_1),\wt(y_2))}(x_2y_2\otimes x_1y_1)=s(x_1\otimes x_2)\cdot s(y_1\otimes y_2).
		\end{align*}
		Therefore $s\mid_M:M\rightarrow M$ is an algebra homomorphism.
		
		It is direct to see that $r_\varepsilon\circ\fr(\intU_1(\mathfrak{n}))\subset M$. Hence both $s\circ r_\varepsilon\circ\fr$ and $r_\varepsilon\circ\fr$ are algebra homomorphisms. It will suffice to check \eqref{eq:rs} for $x$ equaling divided powers, which follows immediately from \eqref{eq:cop}.
	\end{proof}
	
	For any $w\in W$, set $\intA_{\varepsilon}(\mathfrak{n}(w))=\intA_{R_\varepsilon}(\mathfrak{n}(w))$ and $\intA_1(\mathfrak{n}(w))=\intA_{R_1}(\mathfrak{n}(w))$. Fix a reduced expression $\underline{w}=(i_r,\dots,i_1)$ of $w$. Let $J=\{1,\dots,r\}$. Recall the partition $J=J_\ex\sqcup J_\fz$ in \S \ref{sec:qcs} and the unipotent quantum minors $D_t\in\intA_q(\mathfrak{n}(w))$ ($t\in J$) in \S \ref{sec:minor}. 
	
	Recall the initial quantum seed $\cL_{\underline{w}}=(\mathbf{x}=(q^{\gamma_t}D_t)_{t\in J},\Lambda,\tB_{\underline{w}})$ of $\intA_{q^{1/2}}(\mathfrak{n}(w))$ in \S \ref{sec:qcs}. In particular, the tuple $\mathbf{D}=(D_t)_{t\in J}$ is $\Lambda$-commutative. Define elements $\mathbf{D}^{\mathbf{a}}$ ($\mathbf{a}\in \mathbb{Z}^J$) in $F_q(\mathfrak{n}(w))$ as in \eqref{eq:twpro}. For any $\mathbf{a}\in\mathbb{Z}^J$, write $\mathbf{a}/l\in \mathbb{Z}^J$ to be the tuple which equals $\mathbf{0}$ if $\mathbf{a}\not\in l\mathbb{Z}^J$ and equals $(a_t/l)_{t\in J}$ if $\mathbf{a}\in l\mathbb{Z}^J$.
	
	\begin{prop}\label{prop:le1}
		For any $\mathbf{a}\in\mathbb{N}^J$, we have $$\dofr\big({_1\mathbf{D}^\mathbf{a}}\big)={_\varepsilon \mathbf{D}^{l\mathbf{a}}}\qquad\text{and}\qquad \dfr\big({_\varepsilon \mathbf{D}^{\mathbf{a}}}\big)={_1\mathbf{D}^{\mathbf{a}/l}}.$$
	\end{prop}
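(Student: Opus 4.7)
The plan is to establish the first identity by reducing it to the single-minor case $\dofr({_1 D_t}) = {_\varepsilon D_t^l}$ and then using the algebra homomorphism property of $\dofr$; the second identity will follow via the splitting property $\dfr\circ\dofr=\mathrm{id}$ in the $l$-divisible case and via the trace-like identity of Lemma \ref{prop:vani} in the vanishing case. Since $\dofr$ is a $\mathbb{Z}[\varepsilon]$-algebra homomorphism by Proposition \ref{prop:Frli}, and since the $q$-scalar prefactor $q^{(1/2)\sum_{i>j} a_ia_j\lambda_{ij}}$ appearing in the twisted product \eqref{eq:twpro} specialises to $1$ under $_1$ and the analogous prefactor for $\mathbf{D}^{l\mathbf{a}}$ specialises to $1$ under $_\varepsilon$ (because $\varepsilon^{l}=1$ kills the $l^2$ scaling), the first identity reduces to $\dofr({_1 D_t}) = {_\varepsilon D_t^l}$ for each $t\in J$. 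To prove this, I would combine the unipotent-quantum-minor multiplication rule $D(w\lambda,\lambda)\cdot D(w\mu,\mu)=q^{\ast}D(w(\lambda+\mu),\lambda+\mu)$ -- which yields $D_t^l = q^{\ast}D(w_t(l\varpi_{i_t}),l\varpi_{i_t})$ with the prefactor trivialising at $\varepsilon$ -- with the identity $\dofr({_1 D(w\lambda,\lambda)}) = {_\varepsilon D(w(l\lambda),l\lambda)}$, which may be verified by evaluating both sides on a test element $x\in\intU_\varepsilon(\mathfrak{n})_{l(\lambda-w\lambda)}$ via the formula $D(w\lambda,\lambda)(x)=(xv_{w\lambda},v_\lambda)_\lambda$ and invoking Lusztig's Frobenius-compatible embedding between the specialisations of $\intV(\lambda)$ at $_1$ and of $\intV(l\lambda)$ at $_\varepsilon$, which sends $v_\lambda\mapsto v_{l\lambda}$, matches the extremal vectors $v_{w\lambda}\mapsto v_{w(l\lambda)}$, and identifies the bilinear pairings through the $\ofr$-pullback.

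For the second identity, when $\mathbf{a}=l\mathbf{b}$ with $\mathbf{b}\in\mathbb{N}^J$, the first identity combined with $\dfr\circ\dofr=\mathrm{id}$ immediately gives $\dfr({_\varepsilon\mathbf{D}^{l\mathbf{b}}})={_1\mathbf{D}^{\mathbf{b}}}$. When $\mathbf{a}\notin l\mathbb{N}^J$, decompose $\mathbf{a}=l\mathbf{b}+\mathbf{c}$ with $\mathbf{c}\in\mathbb{N}^J$, $0\leq c_t<l$, and $\mathbf{c}\neq 0$; using \eqref{eq:qprod} one writes ${_\varepsilon\mathbf{D}^{\mathbf{a}}}=\eta\cdot\dofr({_1\mathbf{D}^{\mathbf{b}}})\cdot{_\varepsilon\mathbf{D}^{\mathbf{c}}}$ for some unit $\eta\in\mathbb{Z}[\varepsilon]^{\times}$, and the module relation $\dfr(\dofr(f)g)=f\dfr(g)$ of Proposition \ref{prop:Frli} reduces the claim to $\dfr({_\varepsilon\mathbf{D}^{\mathbf{c}}})=0$. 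This vanishes by weight \eqref{eq:homo} whenever $\sum c_t\beta_t\notin lQ_+$. Otherwise, applying Lemma \ref{prop:vani} to the ordered product expression of ${_\varepsilon\mathbf{D}^{\mathbf{c}}}$ and cyclically moving $D_k^{c_k}$ to the end produces an identity $(1-\varepsilon^{c_k\sum_{j\neq k}c_j\lambda_{kj}})\dfr({_\varepsilon\mathbf{D}^{\mathbf{c}}})=0$ for each $k\in J$; a divisibility analysis using the structure of the compatible pair $(\Lambda,\tB_{\underline{w}})$ should exhibit some $k$ with $l\nmid c_k\sum_{j\neq k}c_j\lambda_{kj}$, forcing the vanishing.

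I expect the principal obstacle to be the single-minor identity $\dofr({_1 D_t}) = {_\varepsilon D_t^l}$: both tracking the precise $q$-scalar in the product formula $D_t^l=q^{\ast}D(w_t(l\varpi_{i_t}),l\varpi_{i_t})$ and confirming its trivialisation under $\varepsilon$-specialisation, and matching the specialised pairings on integrable highest weight modules via Lusztig's Frobenius embedding, require delicate bookkeeping. A secondary challenge is the vanishing case in the second identity when the weight $\sum c_t\beta_t$ lies in $lQ_+$ despite $\mathbf{c}\notin l\mathbb{N}^J$, where weight alone is insufficient and one must exploit the cyclic symmetry from Lemma \ref{prop:vani} together with a divisibility analysis on $\Lambda$.
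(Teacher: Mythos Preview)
Your strategy for the first identity is sound and close to the paper's, though the paper verifies $\dofr({_1D_t})={_\varepsilon D_t^l}$ differently: rather than invoking a module-level Frobenius embedding $\intV_1(\lambda)\hookrightarrow\intV_\varepsilon(l\lambda)$, it uses Lemma~\ref{le:prca} to characterise $D(l\lambda_t,l\varpi_{i_t})$ as the dual of a unique canonical basis element $b\notin\intI(l\varpi_{i_t})$, and then checks equality by evaluating both sides on every $b'\in\mathbf{B}$, using the elementary inclusion $\ofr\big(R_\varepsilon\otimes\intI(l\varpi_{i_t})\big)\subset R_1\otimes\intI(\varpi_{i_t})$. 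Your route would work too, but the paper's is self-contained within the framework already set up.

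The genuine gap is in the vanishing part of the second identity. Your cyclic argument yields, for each $k$, the relation $(1-\varepsilon^{c_k\Lambda(\be_k,\mathbf{c})})\dfr({_\varepsilon\mathbf{D}^{\mathbf{c}}})=0$, and you then hope that some $k$ satisfies $l\nmid c_k\Lambda(\be_k,\mathbf{c})$. But nothing in the compatible pair $(\Lambda,\tB_{\underline{w}})$ controls $\Lambda(\be_k,\mathbf{c})$ modulo~$l$ for a standard basis vector $\be_k$; the compatibility relation \eqref{eq:comp} only constrains $\Lambda(\mathbf{b}_j,\,\cdot\,)$ for the \emph{columns} $\mathbf{b}_j$ of $\tB$ with $j\in J_\ex$. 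In particular, when $\mathbf{c}$ is supported on frozen indices, or when the relevant entries $\lambda_{kj}$ happen to be $\equiv 0\pmod l$, your constraints are vacuous.

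The paper handles this by splitting into two genuinely different cases. If $l\nmid a_t$ for some \emph{exchangeable} $t$, it commutes $\mathbf{D}^{\mathbf{c}}$ not with $\mathbf{D}^{\be_k}$ but with $\mathbf{D}^{\mathbf{b}_t'}$ where $\mathbf{b}_t'\equiv\mathbf{b}_t\pmod l$; the compatibility relation then forces $\Lambda(\mathbf{a},\mathbf{b}_t')\equiv -2\dt_{i_t}a_t\not\equiv 0\pmod l$, and Lemma~\ref{prop:vani} together with Lemma~\ref{le:qeo}(c) gives the vanishing. If instead every exchangeable $a_t$ is divisible by $l$ (so the non-divisibility occurs only at frozen indices), the commutator argument is unavailable, and the paper uses a completely different computation: the product of the remaining frozen minors equals, up to a root-of-unity scalar, a single minor ${_\varepsilon D(w^{-1}\mu,\mu)}$ with $\langle h_i,\mu\rangle<l$ for all $i$, and one checks directly that $\dfr$ kills this by evaluating on monomials in divided powers and observing that $f_j^{(lb)}v_\mu=0$ whenever $b>0$. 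Your proposal is missing both of these ideas.
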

	
	\begin{proof}
		The case when $w=e$ is trivial. We assume $w\neq e$.
		
		For any $\lambda\in P_+$, write 
		$$\intV_\varepsilon(\lambda)=R_\varepsilon\otimes_{\mathbb{Z}[q^{\pm1}]}\intV(\lambda)\qquad\text{and}\qquad\intV_1(\lambda)=R_1\otimes_{\mathbb{Z}[q^{\pm1}]}\intV(\lambda).$$
		Let $_\varepsilon \big(\cdot,\cdot\big)_\lambda$ (resp., $_1\big(\cdot,\cdot\big)_\lambda$) be the $\mathbb{Z}[\varepsilon]$-bilinear form on $\intV_\varepsilon(\lambda)$ (resp., $\intV_1(\lambda)$) obtained from base change of $\big(\cdot,\cdot\big)_\lambda:\intV(\lambda)\times\intV(\lambda)\rightarrow\mathbb{Z}[q^{\pm1}]$.
		
		We then prove the first equality. Notice that by \eqref{eq:qprod}, we have
		\[
		\ D_t^l D_{t'}^l=q^{l^2\Lambda(\be_t,\be_{t'})}D_{t'}^lD_t^l \qquad \text{in $\intA_{q^{1/2}}(\mathfrak{n})$},
		\]
		for $ t,t'\in J$. Here $\be_k$ ($k\in J$) is the standard basis element in $\mathbb{Z}^J$. Therefore after specialisation, element ${_\varepsilon D_t^l}$ (${t\in J 
		}$) commutes with each other in $\intA_\varepsilon(\mathfrak{n})$. Since $\dofr$ is an algebra homomorphism, it will suffice to show for any $t\in J$,
		\begin{equation}\label{eq:cfr}
		\dofr({_1D_t})={_\varepsilon D_t^l}.
		\end{equation}
		
		By \cite{KKKO17}*{Corollary 9.1.3}, one has
		\[
		D_t^l=D(\lambda_t,\varpi_{i_t})^l=q^{-l(l-1)/2\cdot(\varpi_{i_t},\varpi_{i_t}-\lambda_t)}D(l\lambda_t,l\varpi_{i_t})\qquad \text{in $\intA_{q^{1/2}}(\mathfrak{n})$}.
		\]
		Here $\lambda_t=s_{i_1}\dots s_{i_t}\varpi_{i_t}=w^{-1}\varpi_{i_t}$ as in \eqref{eq:defl}.
		
		Hence in $\intA_\varepsilon(\mathfrak{n})$, 
		\[
		_\varepsilon D_t^l={_\varepsilon D(l\lambda_t,l\varpi_{i_t})}.
		\]
		
		By Lemma \ref{le:prca}, take $b\in \mathbf{B}$ such that $ D(l\lambda_t,l\varpi_{i_t})(b')=\delta_{b,b'}$, for $b'\in\mathbf{B}$. Then after base change we have
		$$ _\varepsilon D_t^l({_\varepsilon b'})=_\varepsilon D(l\lambda_t,l\varpi_{i_t})({_\varepsilon} b')=\delta_{b,b'},$$
		for $b'\in \mathbf{B}$.
		
		
		One the other hand, the element $\dofr({_1D_t})\in \intA_\varepsilon(\mathfrak{n})$ is given by
		\[
		\dofr({_1D_t})(f)=(_1 D_t)(\ofr(f))={_1\big(\ofr(f)v_{\lambda_t},v_{\varpi_{i_t}}\big)_{\varpi_{i_t}}},\qquad \text{for $f\in \intU_\varepsilon(\mathfrak{n})$.}
		\]
		Here we use the same notations to denote images of extremal vectors after base changes. For any $b'\in \mathbf{B}$, it follows from weight considerations that $\dofr({_1D_t})({_1 b'})=0$ unless $\wt(b')=l\varpi_{i_t}-l\lambda_t$. 
		
		Recall the left ideal $\intI(\lambda)$ ($\lambda\in P_+$) from \eqref{eq:ideal}. By definition, one has
		\begin{equation}\label{eq:frR}
		\ofr\big(R_\varepsilon\otimes \intI(l\varpi_{i_t})\big)\subset R_1\otimes\intI(\varpi_{i_t}).
		\end{equation}
		
		Take $b'\in \mathbf{B}_{l\varpi_{i_t}-l\lambda_t}$. If $b'\neq b$, by Lemma \ref{le:prca} we have $b'\in\intI(l\varpi_{i_t})$. Combining \eqref{eq:frR} and \eqref{eq:pair}, we have
		\[
		\dofr({_1D_t})({_\varepsilon b'})={_1\big(\ofr({_\varepsilon}b')}v_{\lambda_t},v_{\varpi_{i_t}}\big)_{\varpi_{i_t}}=0.
		\]
		
		By Lemma \ref{le:prca}, we have
		\[
		b\in e_{i_t}^{(la_t)}\dots e_{i_1}^{(la_1)}+\intI(l\varpi_{i_t}),
		\]
		where $a_k=\langle h_{i_k},s_{i_{k-1}}\dots s_{i_t}(\varpi_{i_t})\rangle$, for $1\leq k\leq t$. Hence by \eqref{eq:frR} and \eqref{eq:pair}, we have
		\[
		\dofr({_1D_t})({_\varepsilon}b)={_1\big(\ofr({_\varepsilon}b)}v_{\lambda_t},v_{\varpi_{i_t}}\big)_{\varpi_{i_t}}={_1\big({_1}(e_{i_t}^{(a_t)}\dots e_{i_1}^{(a_1)})v_{\lambda_t}},v_{\varpi_{i_t}}\big)_{\varpi_{i_t}}=1.
		\]
		In conclusion we have $\dofr({_1D_t})({_\varepsilon}b')={_\varepsilon D_t^l}({_\varepsilon b'})$ for $b'\in \mathbf{B}$. Therefore \eqref{eq:cfr} is proved.
		
		We next prove the second equality. Firstly suppose $\mathbf{a}=l\mathbf{a}'$ for some $\mathbf{a}'\in\mathbb{N}^r$. Then thanks to Proposition \ref{prop:Frli} and the previous discussion, we have
		\[
		\dfr({_\varepsilon \mathbf{D}^{\mathbf{a}}})=\dfr\circ \dofr({_1 \mathbf{D}^{\mathbf{a}'}})={_1 \mathbf{D}^{\mathbf{a}'}}.
		\]
		
		Now suppose $\mathbf{a}\not\in l\mathbb{N}^r$. We need to show that $\dfr({_\varepsilon \mathbf{D}^\mathbf{a}})=0$. 
		
		Recall $J_\fz=\{t\in [1,r]\mid i_k\neq i_t,\forall k>t\}$ and $J_\ex=[1,r]\backslash J_\fz$. We divide into two cases depending on divisibility of $a_t$ ($t\in J_\ex$) by $l$.
		
		\underline{Case I}: $l\mid a_t$, for all $t\in J_\ex$. 
		
		Write $\mathbf{a}=l\mathbf{a}'+\mathbf{a}''$, with $\mathbf{a}'=(a_t')_{t\in J},\mathbf{a}''=(a''_t)_{t\in J}$ in $\mathbb{N}^J$ and $0\leq a''_t<l$ ($1\leq t\leq r$). Then $a_t''=0$ for $t\in J_\ex$ and $\mathbf{a}''\neq \mathbf{0}$ by our assumption. By \eqref{eq:qprod} we have
		\[
		\mathbf{D}^{l\mathbf{a}'}\mathbf{D}^{\mathbf{a}''}=q^{1/2\Lambda(l\mathbf{a}',\mathbf{a}'')}\mathbf{D}^{\mathbf{a}}\qquad\text{in $\intA_{q^{1/2}}(\mathfrak{n})$}.
		\]
		Hence after base change we have
		$
		{_\varepsilon \mathbf{D}^\mathbf{a}}={_\varepsilon \mathbf{D}^{l\mathbf{a}'}}{_\varepsilon \mathbf{D}^{\mathbf{a}''}}$ in $\intA_\varepsilon(\mathfrak{n})$.
		
		It then follows from Proposition \ref{prop:Frli} that
		\[
		\dfr({_\varepsilon \mathbf{D}^{\mathbf{a}}})=\dfr({_\varepsilon \mathbf{D}^{l\mathbf{a}'}}{_\varepsilon \mathbf{D}^{\mathbf{a}''}})=\dfr(\dofr({_1 \mathbf{D}^{\mathbf{a}'}}){_\varepsilon \mathbf{D}^{\mathbf{a}''}})={_1 \mathbf{D}^{\mathbf{a}'}}\cdot\dfr({_\varepsilon \mathbf{D}^{\mathbf{a}''}}).
		\]
		Hence it will suffice to show $\dfr({_\varepsilon \mathbf{D}^{\mathbf{a}''}})=0$.
		
		Recall $\lambda_t=s_{i_1}\dots s_{i_t}\varpi_{i_t}=w^{-1}\varpi_{i_t}$ for $t\in J_\fz$. Since $\mathbf{a}''$ only contains the $\be_t$ factors for those $t\in J_\fz$, the monomial $\mathbf{D}^{\mathbf{a}''}$ is a product of the quantum minors of the form $D(a''_t(w^{-1}\varpi_{i_t}),a_t''\varpi_{i_t})$ for various $t\in J_\fz$. It follows from \cite{KKKO17}*{Corollary 9.1.3} that 
		\[
		_\varepsilon \mathbf{D}^{\mathbf{a}''}=\varepsilon^N{_\varepsilon D(w^{-1}\mu,\mu)}\qquad \text{in $\intA_\varepsilon(\mathfrak{n})$}
		\]
		for some $N\in\mathbb{Z}$, and $\mu\in P_+$ where $\langle h_i,\mu\rangle<l$, for $i\in I$.
		
		Take any $f=e_{j_1}^{(b_1)}\dots e_{j_m}^{(b_m)}$ in $\intU_q(\mathfrak{n})$. We have
		\begin{align*}
		\dfr(_\varepsilon D(w^{-1}\mu,\mu))({_1}f)&={_1\big(\fr(f)v_{w^{-1}\mu},v_\mu\big)}_\mu\\&={_1\big(e_{j_1}^{(lb_1)}\dots e_{j_m}^{(lb_m)}v_{w^{-1}\mu},v_\mu\big)_\mu}\\&={_1\big(v_{w^{-1}\mu},f_{j_m}^{(lb_m)}\dots f_{j_1}^{(lb_1)}v_\mu\big)_\mu}.
		\end{align*}
		By our condition on $\mu$, the vector $f_{j_m}^{(lb_m)}\dots f_{j_1}^{(lb_1)}v_\mu$ equals $0$ unless $b_1=\dots=b_m=0$, in which case $f=1$. In any cases, the element $\big(e_{j_1}^{(lb_1)}\dots e_{j_m}^{(lb_m)}v_{w^{-1}\mu},v_\mu\big)_\mu$ equals $0$. Therefore we deduce that $\dfr({_\varepsilon \mathbf{D}^{\mathbf{a}''}})=0$.
		
		\underline{Case II}: $l\nmid a_t$, for some $t\in J_\ex$. We fix such an index $t$.
		
		Write $\tB_{\underline{w}}=(b_{ij})_{i\in J,j\in J_\ex}$. Let $\mathbf{b}_t=\sum_{i=1}^rb_{it}\be_i\in \mathbb{Z}^J$ be the $t$-th column vector. Since $(\Lambda,\tB_{\underline{w}})$ is compatible with skew-symmetrizer described in \S \ref{sec:qcs} (a), we have
		\[
		\Lambda(\mathbf{b}_t,\be_k)=2\dt_{i_k}\delta_{tk}\qquad \text{for $1\leq k\leq r$.}
		\]
		Take $\mathbf{b}_t'\in\mathbb{N}^J$ such that $\mathbf{b}_t'-\mathbf{b}_t\in l\mathbb{Z}^J$. Take $\mathbf{a}'\in\mathbb{N}^J$ such that $\mathbf{c}=\mathbf{a}+l\mathbf{a}'-\mathbf{b}_t'\in\mathbb{N}^J$. Then we have
		\[
		\mathbf{D}^{\mathbf{c}}\mathbf{D}^{\mathbf{b}_t'}-\mathbf{D}^{\mathbf{b}_t'}\mathbf{D}^{\mathbf{c}}=(q^{\Lambda(\mathbf{c},\mathbf{b}_t')}-q^{-\Lambda(\mathbf{c},\mathbf{b}_t')})\mathbf{D}^{\mathbf{a}+l\mathbf{a}'}\qquad \text{in $\intA_{q^{1/2}}(\mathfrak{n})$.}
		\]
		
		Hence by Lemma \ref{prop:vani} and Proposition \ref{prop:Frli} we have in $\intA_\varepsilon(\mathfrak{n})$
		\begin{align}
		0=\dfr({_\varepsilon}\mathbf{D}^{\mathbf{c}}{_\varepsilon}\mathbf{D}^{\mathbf{b}_t'}-{_\varepsilon}\mathbf{D}^{\mathbf{b}_t'}{_\varepsilon}\mathbf{D}^{\mathbf{c}})&=(\varepsilon^{\Lambda(\mathbf{c},\mathbf{b}_t')}-\varepsilon^{-\Lambda(\mathbf{c},\mathbf{b}_t')})\dfr({_\varepsilon \mathbf{D}}^{\mathbf{a}+l\mathbf{a}'})\notag\\&=(\varepsilon^{\Lambda(\mathbf{c},\mathbf{b}_t')}-\varepsilon^{-\Lambda(\mathbf{c},\mathbf{b}_t')})(_1 \mathbf{D}^{\mathbf{a}'})\dfr({_\varepsilon \mathbf{D}}^{\mathbf{a}})\label{eq:zerof}.
		\end{align}
		Notice that modulo $l$ we have
		\[
		\Lambda(\mathbf{c},\mathbf{b}_t')=\Lambda(\mathbf{a}-\mathbf{b}_t',\mathbf{b}_t')=\Lambda(\mathbf{a},\mathbf{b}_t')=\Lambda(\mathbf{a},\mathbf{b}_t)=-2\dt_{i_t}a_t,
		\]
		which is nonzero since $(l,2\dt_{i_t})=1$ and $l\nmid a_t$ by our assumption.
		
		Therefore $\varepsilon^{\Lambda(\mathbf{c},\mathbf{b}_t')}-\varepsilon^{-\Lambda(\mathbf{c},\mathbf{b}_t')}\neq 0$. Since $_1\mathbf{D}^{\mathbf{a}'}$ is a nonzero element in $\intA_1(\mathfrak{n}(w))$, by Lemma \ref{le:qeo} (c) and \eqref{eq:zerof} we deduce that $\dfr({_\varepsilon \mathbf{D}^{\mathbf{a}}})=0$. We finish the proof.
	\end{proof}
	
	\begin{corollary}\label{cor:rest}
		For any $w\in W$, we have 
		\[
		\dfr\big(\intA_\varepsilon(\mathfrak{n}(w))\big)\subset \intA_1(\mathfrak{n}(w))\qquad\text{and}\qquad  \dofr\big(\intA_1(\mathfrak{n}(w))\big)\subset \intA_\varepsilon(\mathfrak{n}(w)).
		\]
	\end{corollary}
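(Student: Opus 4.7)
I would establish the two inclusions separately. The inclusion $\dofr\bigl(\intA_1(\mathfrak{n}(w))\bigr)\subset\intA_\varepsilon(\mathfrak{n}(w))$ is almost a formal consequence of Theorem~\ref{thm:main} combined with the fact that $\dofr$ is a $\mathbb{Z}[\varepsilon]$-algebra homomorphism (Proposition~\ref{prop:Frli}). By definition, $\intA_1(\mathfrak{n}(w))=\cA_q(\cL_{\underline{w}})$ (after base change to $R_1$) is generated as a $\mathbb{Z}[\varepsilon]$-algebra by the specialisations of the cluster variables over every seed mutation-equivalent to $\cL_{\underline{w}}$. Theorem~\ref{thm:main} sends each such $_1x$ to the cluster monomial $_\varepsilon x^l$, which lies in $\intA_\varepsilon(\mathfrak{n}(w))$; since $\dofr$ is multiplicative, the image of the whole subalgebra sits inside $\intA_\varepsilon(\mathfrak{n}(w))$.

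\textbf{The harder inclusion.} Since $\dfr$ is only an $\intA_1(\mathfrak{n})$-module splitting of $\dofr$ and not an algebra homomorphism, a direct generator-level argument is unavailable. My approach for $\dfr\bigl(\intA_\varepsilon(\mathfrak{n}(w))\bigr)\subset\intA_1(\mathfrak{n}(w))$ is a clearing-of-denominators trick anchored at the initial seed. Fix $f\in\intA_\varepsilon(\mathfrak{n}(w))$; by $\mathbb{Z}[\varepsilon]$-linearity I may assume $f={_\varepsilon h}$ for some $h\in\intA_{q^{1/2}}(\mathfrak{n}(w))$. The quantum Laurent phenomenon applied to the initial seed $\cL_{\underline{w}}$ writes $h=\sum_{\mathbf{b}\in S}c_\mathbf{b}\mathbf{D}^\mathbf{b}$ inside $F_q(\mathfrak{n}(w))$ for a finite $S\subset\mathbb{Z}^J$ with $b_t\geq 0$ whenever $t\in J_\fz$ and $c_\mathbf{b}\in\mathbb{Z}[q^{\pm1/2}]$. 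I then choose $\mathbf{a}\in\mathbb{N}^J$ supported on $J_\ex$ and large enough that $l\mathbf{a}+\mathbf{b}\in\mathbb{N}^J$ for every $\mathbf{b}\in S$. Commuting $\mathbf{D}^{l\mathbf{a}}$ past each $\mathbf{D}^\mathbf{b}$ via \eqref{eq:qprod} and specialising yields
\[
{_\varepsilon\mathbf{D}^{l\mathbf{a}}}\cdot f=\sum_{\mathbf{b}\in S}\gamma_\mathbf{b}\,{_\varepsilon\mathbf{D}^{l\mathbf{a}+\mathbf{b}}}\qquad\text{in $\intA_\varepsilon(\mathfrak{n}(w))$,}
\]
with $\gamma_\mathbf{b}\in\mathbb{Z}[\varepsilon]$ and each exponent $l\mathbf{a}+\mathbf{b}\in\mathbb{N}^J$.

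\textbf{Pulling off the factor.} I would next evaluate $\dfr$ on both sides of the above identity. On the left, the formula $\dofr({_1\mathbf{D}^\mathbf{a}})={_\varepsilon\mathbf{D}^{l\mathbf{a}}}$ from Proposition~\ref{prop:le1} together with the projection relation $\dfr(\dofr(x)y)=x\dfr(y)$ from Proposition~\ref{prop:Frli} give
\[
\dfr\bigl({_\varepsilon\mathbf{D}^{l\mathbf{a}}}\cdot f\bigr)={_1\mathbf{D}^\mathbf{a}}\cdot\dfr(f).
\]
On the right, Proposition~\ref{prop:le1} sends each term $_\varepsilon\mathbf{D}^{l\mathbf{a}+\mathbf{b}}$ to $_1\mathbf{D}^{(l\mathbf{a}+\mathbf{b})/l}$ (understood as zero if $l\nmid l\mathbf{a}+\mathbf{b}$), every such image lying in $\intA_1(\mathfrak{n}(w))$. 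Hence ${_1\mathbf{D}^\mathbf{a}}\cdot\dfr(f)\in\intA_1(\mathfrak{n}(w))$. The element $_1\mathbf{D}^\mathbf{a}$ is nonzero in $\intA_1(\mathfrak{n}(w))$ by the algebraic independence of the initial cluster, so Lemma~\ref{le:qeo}(b) forces $\dfr(f)\in\intA_1(\mathfrak{n}(w))$, which is what I wanted.

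\textbf{Main obstacle.} The crux is precisely the failure of $\dfr$ to be multiplicative; the projection formula of Proposition~\ref{prop:Frli} is what lets me absorb the auxiliary monomial $_1\mathbf{D}^\mathbf{a}$ introduced to clear Laurent denominators, and Lemma~\ref{le:qeo}(b) is what lets me divide it out at the end. The rest of the argument is bookkeeping inside the initial quantum torus.
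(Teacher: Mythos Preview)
Your argument for the inclusion $\dfr\bigl(\intA_\varepsilon(\mathfrak{n}(w))\bigr)\subset\intA_1(\mathfrak{n}(w))$ is essentially the paper's own: clear Laurent denominators in the initial seed by a factor $\mathbf{D}^{l\mathbf{a}}$, apply Proposition~\ref{prop:le1} and the projection formula from Proposition~\ref{prop:Frli}, then divide out using Lemma~\ref{le:qeo}(b). That part is fine.

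The gap is in your treatment of the $\dofr$ inclusion. You invoke Theorem~\ref{thm:main} to say that $\dofr$ sends an arbitrary cluster variable $_1x$ to $_\varepsilon x^l$, but in the paper's logical order Corollary~\ref{cor:rest} is established \emph{before} Theorem~\ref{thm:main}, and the induction step of Theorem~\ref{thm:main} explicitly appeals to Corollary~\ref{cor:rest} (to know that $\dofr({_1x_k''})$ lies in $\intA_\varepsilon(\mathfrak{n}(w))$ so that one can cancel inside an integral domain). So your argument is circular as written. Note also that the initial cluster variables alone do not generate $\intA_1(\mathfrak{n}(w))$ as an algebra, so you cannot simply retreat to the initial seed while keeping a pure generator-level argument.

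The fix is immediate: run exactly the same clearing-of-denominators trick you used for $\dfr$, but replace the projection formula by multiplicativity of $\dofr$. For $f\in\intA_{q^{1/2}}(\mathfrak{n}(w))$, choose $\mathbf{a}\in\mathbb{N}^J$ with $\mathbf{D}^{l\mathbf{a}}f$ a $\mathbb{Z}[q^{\pm1/2}]$-combination of monomials $\mathbf{D}^{\mathbf{b}}$ with $\mathbf{b}\in\mathbb{N}^J$; applying $\dofr$ and Proposition~\ref{prop:le1} (which needs only the initial seed) gives ${_\varepsilon\mathbf{D}^{l^2\mathbf{a}}}\cdot\dofr({_1f})\in\intA_\varepsilon(\mathfrak{n}(w))$, and Lemma~\ref{le:qeo}(b) finishes. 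This is what the paper means by ``in a similar way.''
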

	
	\begin{proof}
		
		Retain the same notations as before. Take any $f\in \intA_{q^{1/2}}(\mathfrak{n}(w))$. By the Laurent phenomenon, we have
		\[
		f\in \mathbb{Z}[q^{\pm1/2}]\langle D_t^{\pm1}\mid 1\leq t\leq r\rangle\qquad \text{in $F_{q}(\mathfrak{n}(w))$.}
		\]
		We take $\mathbf{a}\in\mathbb{N}^r$ with entries large enough such that 
		\[
		\mathbf{D}^{l\mathbf{a}}f\in\mathbb{Z}[q^{\pm1/2}]\langle D_t\mid 1\leq t\leq r\rangle\qquad\text{in $\intA_{q^{1/2}}(\mathfrak{n}(w)).$}
		\]
		Then in $\intA_\varepsilon(\mathfrak{n}(w))$, the product $_\varepsilon\mathbf{D}^{l\mathbf{a}}{_\varepsilon f}$ is a $\mathbb{Z}[\varepsilon]$-linear combination of elements in the form $_\varepsilon \mathbf{D}^{\mathbf{a}}$ ($\mathbf{a}\in\mathbb{N}^r$). By Proposition \ref{prop:Frli} and Proposition \ref{prop:le1}, we deduce that
		\[
		(_1\mathbf{D}^\mathbf{a})\dfr({_\varepsilon f})=\dfr(\dofr({_1 \mathbf{D}^{\mathbf{a}}}){_\varepsilon f)}= \dfr(_\varepsilon\mathbf{D}^{l\mathbf{a}}{_\varepsilon f})\in \intA_1(\mathfrak{n}(w)).
		\]
		Note that $_1\mathbf{D}^\mathbf{a}$ belongs to $\intA_1(\mathfrak{n}(w))$. By Lemma \ref{le:qeo} (b) we deduce that $\dfr({_\varepsilon f})$ also belongs to $\intA_1(\mathfrak{n}(w))$. Hence we have $\dfr\big(\intA_\varepsilon(\mathfrak{n}(w))\big)\subset \intA_1(\mathfrak{n}(w))$.
		
		One can show another inclusion in a similar way by using the fact that $\dofr$ is an algebra homomorphism.
	\end{proof}
	
	We are now ready to prove the main theorem.
	
	\begin{proof}[Proof of Theorem \ref{thm:main}]
		Let $\cL=\mu_{k_s}\dots\mu_{k_1}(\cL_{\underline{w}})$ be a quantum seed of $\intA_{q^{1/2}}(\mathfrak{n}(w))$ which is mutation-equivalent to the initial seed $\cL_{\underline{w}}$, where $(k_s,\dots,k_1)$ is a sequence of exchangeable indices. We prove the theorem by induction on $s$.
		
		\underline{Base case:} Suppose $s=0$. Then $\cL=\cL_{\underline{w}}=(\mathbf{x}=(x_t)_{t\in J},\Lambda,\tB_{\underline{w}})$, where $x_t=q^{\gamma_t}D_t$ with $\gamma_t\in\mathbb{Z}/2$. Take $\mathbf{a}=(a_t)_{t\in J}\in\mathbb{N}^J$. Thanks to Proposition \ref{prop:le1}, we have 
		\begin{align*}
		&\dofr({_1\mathbf{x}^{\mathbf{a}}})=\dofr({_1\mathbf{D}^{\mathbf{a}}})={_\varepsilon \mathbf{D}^{l\mathbf{a}}}={_\varepsilon \mathbf{x}^{l\mathbf{a}}}\\\text{and}\qquad & \dfr({_\varepsilon\mathbf{x}^{\mathbf{a}}})=\big(\prod_{t\in J}\varepsilon^{a_t\gamma_t}\big)\dfr({_\varepsilon\mathbf{D}^\mathbf{a}})\stackrel{(\heartsuit )}{=}{_1\mathbf{D}^{\mathbf{a}/l}}={_1\mathbf{x}^{\mathbf{a}/l}}.
		\end{align*}
		Here $(\heartsuit)$ follows by noticing that both sides are zero unless $\mathbf{a}\in lQ_+$, in which case $\varepsilon^{a_t\gamma_t}=1$ for $t\in J$. Hence the theorem is prove for $s=0$.
		
		\underline{Induction step:} Suppose the theorem holds for the quantum seed $\cL'=(\mathbf{x}'=(x_t')_{t\in J},\Lambda',\tB')$. We show it holds for the quantum seed $\cL''=\mu_k(\cL')=(\mathbf{x}''=(x_t'')_{t\in J},\Lambda'',\tB'')$ for any $k\in J_\ex$.
		
		Take $\mathbf{a}=(a_t)_{t\in J}\in \mathbb{N}^J$. We firstly  prove 
		$
		\dofr({_1(\mathbf{x}'')^\mathbf{a}})={_\varepsilon(\mathbf{x}'')^{l\mathbf{a}}}, \text{ for $\mathbf{a}\in\mathbb{N}^J$.}
		$ Since $\dofr$ is an algebra homomorphism, it suffices to show that
		\begin{equation}\label{eq:nufr}
		\dofr({_1x''_t})={_\varepsilon (x''_t)^l}\quad \text{for $t\in J$.}
		\end{equation}

		Suppose $t\neq k$. Then $x''_t=x'_t$, and \eqref{eq:nufr} follows immediately from our assumption.
		
		Now suppose $t=k$. Let $\mathbf{b}_k'=\sum_{i=1}^rb'_{ik}\be_i\in \mathbb{Z}^J$ be the $k$-th column vector. Here $b'_{ik}$ is the $(i,k)$-entry of $\tB'$ as usual. By definition, we have
		\[
		x_k''=(\mathbf{x}')^{-\be_k+[\mathbf{b}_k']_+}+(\mathbf{x}')^{-\be_k+[-\mathbf{b}_k']_+}\qquad \text{in $F_q(\mathfrak{n}(w))$.}
		\]
		
		By \S \ref{sec:qcs} (a) we have
		\[
		\Lambda'(-\be_k+[\mathbf{b}'_k]_+,-\be_k+[-\mathbf{b}'_k]_+)=\Lambda'(\mathbf{b}_k',-\be_k+[-\mathbf{b}_k']_+)=-2\dt_{i_k}.
		\]
		
		One has
		\[
		(\mathbf{x}')^{-\be_k+[\mathbf{b}_k']_+}(\mathbf{x}')^{-\be_k+[-\mathbf{b}_k']_+}=q^{-2\dt_{i_k}}(\mathbf{x}')^{-\be_k+[-\mathbf{b}_k']_+}(\mathbf{x}')^{-\be_k+[\mathbf{b}_k']_+}.
		\]
		
		By the quantum binomial identity (\cite{Lu10}*{1.3.5}), in $F_{q}(\mathfrak{n}(w))$ we have
		\begin{align*}
		&(x_k'')^l=\sum_{t=0}^l q^{-t(l-t)\dt_{i_k}}\qbinom{l}{t}_{q^{-\dt_{i_k}}}(\mathbf{x}')^{-t\be_k+t[\mathbf{b}_k']_+}(\mathbf{x}')^{-(l-t)\be_k+(l-t)[-\mathbf{b}_k']_+}\\&=\sum_{t=0}^l q^{-t(l-t)\dt_{i_k}+1/2\Lambda'(-t\be_k+t[\mathbf{b}_k']_+,-l\be_k+t[\mathbf{b}_k']_++(l-t)[-\mathbf{b}_k']_+)}\qbinom{l}{t}_{q^{-\dt_{i_k}}}(\mathbf{x}')^{-l\be_k+t[\mathbf{b}_k']_++(l-t)[-\mathbf{b}_k']_+}.
		\end{align*}
		Multiplying $(x_k')^l$ on both sides, we get the following equation in $\intA_{q^{1/2}}(\mathfrak{n})$,
		\begin{align*}
		&(x_k')^l(x_k'')^l\\&=\sum_{t=0}^l q^{-t(l-t)\dt_{i_k}+1/2\Lambda'((l-t)\be_k+t[\mathbf{b}_k']_+,-l\be_k+t[\mathbf{b}_k']_++(l-t)[-\mathbf{b}_k']_+)}\qbinom{l}{t}_{q^{-\dt_{i_k}}}(\mathbf{x}')^{t[\mathbf{b}_k']_++(l-t)[-\mathbf{b}_k']_+}.
		\end{align*}
		Recall the $\mathbb{Z}[q^{\pm1/2}]$-module structure on $R_\varepsilon$ where $q^{1/2}$ acts by $\varepsilon^{1/2}=\varepsilon^{(l+1)/2}$. Since $2\dt_k$ is coprime to the odd number $l$, the number $\varepsilon^{-d_k/2}$ is a primitive $l$-th root of unity. By \cite{Lu10}*{34.1.2}, in $R_\varepsilon$ we have 
		\[
		\qbinom{l}{t}_{q^{-\dt_{i_k}}}=0\qquad\text{unless $l\mid t$.}
		\]
		
		Therefore in $\intA_\varepsilon(\mathfrak{n})$ we have 
		\begin{align}\label{eq:epx}
		{_\varepsilon(x_k')}^l{_\varepsilon(x_k'')}^l={_\varepsilon(\mathbf{x}')}^{l[-\mathbf{b}_k']_+}+{_\varepsilon(\mathbf{x}')}^{l[\mathbf{b}_k']_+}.
		\end{align}
		
		On the other hand, in $\intA_1(\mathfrak{n})$ we have
		\[
		_1(x_k'){_1(x_k'')}={_1(\mathbf{x}')^{[-\mathbf{b}_k']_+}}+{_1(\mathbf{x}')}^{[\mathbf{b}_k']_+}.
		\]
		Applying $\dofr$ and using our assumption, we have
		\[
		{_\varepsilon(x_k')}^l\dofr({_1x_k''})={_\varepsilon(\mathbf{x}')}^{l[-\mathbf{b}_k']_+}+{_\varepsilon(\mathbf{x}')}^{l[\mathbf{b}_k']_+}.
		\]
		Combining with \eqref{eq:epx}, we have
		\begin{equation}\label{eq:zero}
		{_\varepsilon(x_k')}^l(\dofr({_1x_k''})-{_\varepsilon x_k''})=0.
		\end{equation}
		
		Thanks to Corollary \ref{cor:rest}, the element $\dofr({_1x_k''})$ belongs to $\intA_\varepsilon(\mathfrak{n}(w))$. Hence \eqref{eq:zero} holds in $\intA_\varepsilon(\mathfrak{n}(w))$. Since $\intA_\varepsilon(\mathfrak{n}(w))$ is an integral domain by Lemma \ref{le:qeo} (a), we deduce that
		$
		\dofr({_1x_k''})={_\varepsilon x_k''}.
		$
		Therefore \eqref{eq:nufr} is proved.
		
		We next prove $\dfr\big({_\varepsilon(\mathbf{x}'')^{\mathbf{a}}})={_1(\mathbf{x}'')^{\mathbf{a}/l}}$, for $\mathbf{a}=(a_k)_{k\in J}\in\mathbb{N}^J$. Take $\mathbf{a}'=(a_t')_{t\in J}$ such that $\mathbf{a}=\mathbf{a}'+a_k\be_k$. Then $a_k'=0$.
		
		Firstly suppose $l\mid a_k$. In $\intA_{q^{1/2}}(\mathfrak{n})$ one has 
		\[
		(\mathbf{x}'')^{\mathbf{a}}=q^{-1/2\Lambda(a_k\be_k,\mathbf{a})}(x_k'')^{a_k}(\mathbf{x}'')^{\mathbf{a}'}=q^{-1/2\Lambda(a_k\be_k,\mathbf{a})}(x_k'')^{a_k}(\mathbf{x}')^{\mathbf{a}'}.
		\]
		We deduce that 
		\begin{align*}
		\dfr({_\varepsilon(\mathbf{x}'')^\mathbf{a}})&=\varepsilon^{-1/2\Lambda(a_k\be_k,\mathbf{a})}\dfr({_\varepsilon(x_k'')^{a_k}}{_\varepsilon(\mathbf{x}')^{\mathbf{a}'}})\\&=\dfr(\dofr({_1(x_k'')^{a_k/l}}){_\varepsilon(\mathbf{x}'})^{\mathbf{a}'})\\&={_1(x_k'')^{a_k/l}}\dfr({_\varepsilon(\mathbf{x}')^{\mathbf{a}'}})\\&={_1(x_k'')^{a_k/l}}{_1(\mathbf{x}')^{\mathbf{a}'/l}}\\&={_1(x_k'')^{a_k/l}}{_1(\mathbf{x}'')^{\mathbf{a}'/l}}={_1(\mathbf{x}'')^{\mathbf{a}/l}}.
		\end{align*}
		
		Next suppose $l\nmid a_k$. We need to show that $\dfr({_\varepsilon(\mathbf{x}'')^\mathbf{a}})=0$. Notice that in $F_{q}(\mathfrak{n}(w))$ the product $(x_k')^{lc}(\mathbf{x}'')^\mathbf{a}$ ($c\in\mathbb{Z}$) is a $\mathbb{Z}[q^{\pm1/2}]$-linear combination of monomials $(\mathbf{x}')^{(-a_k+lc)\be_k+\mathbf{r}}$ for various $\mathbf{r}=(r_t)_{t\in J}$ with $r_k=0$. Choose $c>0$ such that $-a_k+lc>0$. Then each term $(\mathbf{x}')^{(-a_k+lc)e_k+\mathbf{r}}$ belongs to $\intA_{q^{1/2}}(\mathfrak{n})$. Hence in $\intA_\varepsilon(\mathfrak{n})$ the product $_\varepsilon(x_k')^{lc}{_\varepsilon(\mathbf{x}'')^\mathbf{a}}$ is a $\mathbb{Z}[\varepsilon]$-linear combination of elements of the form  $_\varepsilon(\mathbf{x}')^{(-a_k+lc)\be_k+\mathbf{r}}$, for various $\mathbf{r}$ as before. Since $-a_k+lc$ is not divided by $l$ and $\mathbf{r}$ does not have $\be_k$ factor, we deduce that $\dfr(_\varepsilon(\mathbf{x}')^{(-a_k+lc)\be_k+\mathbf{r}})=0$ by our assumption. Therefore 
		\[
		0=\dfr(_\varepsilon(x_k')^{lc}{_\varepsilon(\mathbf{x}'')^\mathbf{a}})=\dfr(\dofr({_1(x_k')^c}){_\varepsilon(\mathbf{x}'')^\mathbf{a}})={_1(x_k')^c}\dfr({_\varepsilon(\mathbf{x}'')^\mathbf{a}}).
		\]
		Since $\intA_1(\mathfrak{n})$ is an integral domain by Lemma \ref{le:qeo} (a), we deduce that $\dfr({_\varepsilon(\mathbf{x}'')^\mathbf{a}})=0$, which completes the proof.
	\end{proof}
	
	\begin{remark}
		Analogous computations in the induction step also show in  \cites{HLY,NTY}, where the authors used them to construct a canonical central subalgebra and a regular trace map for a root of unity quantum cluster algebra.
	\end{remark}
	
	\section{Frobenius splittings of unipotent subgroups}
	
	Retain the same notations as in the previous section. We further assume that $l=p$ is a prime number. Fix an algebraically closed field $\kk$ with characteristic $p$. 
	
	In this section, a \emph{scheme} will always mean a separated scheme of finite type defined over $\kk$. A \emph{variety} is a reduced scheme. We will not distinguish a variety with its $\kk$-rational points.
	
	\subsection{Frobenius splittings}
	
	Following \cite{BK}, we recall the concept of Frobenius splittings in this subsection.
	
	For a scheme $X$, the \emph{absolute Frobenius morphism}
	\[
	F=F_X:X\longrightarrow X
	\]
	is the identity map on the underlying space, and the $p$-th power map on the structure sheaf $\mathcal{O}_X$.
	
	A \emph{Frobenius splitting} of $X$ is a morphism (as sheaves of abelian groups) 
	\[
	\varphi:\mathcal{O}_X\longrightarrow \mathcal{O}_X
	\]
	such that 
	
	(a) $\varphi(f^pg)=f\varphi(g)$ for any local sections $f,g\in \mathcal{O}_X$, and 
	
	(b) $\varphi(1)=1$.
	
	Note that (a) is equivalent to the requirement that $\varphi\in \text{Hom}_{\mathcal{O}_X}(F_*\mathcal{O}_X,\mathcal{O}_X)$.
	
	Assume $X$ is nonsingular, and let $\omega_X$ be the canonical sheaf of $X$. One has the canonical isomorphism (\cite{BK}*{\S 1.3})
	\begin{equation}\label{eq:Fspl1}
	\text{Hom}_{\mathcal{O}_X}(F_*\mathcal{O}_X,\mathcal{O}_X)\cong H^0(X,\omega_X^{1-p})
	\end{equation}
	as $\mathcal{O}_X(X)$-modules. 
	
	Let $Y$ be a closed subscheme of $X$. A Frobenius splitting $\varphi$ of $X$ is said to \emph{compatibly split} $Y$ if $\varphi$ preserves the ideal sheaf defining $Y$.
	
	Let $R$ be a (finitely generated) $\kk$-algebra and $X=\text{Spec}\,R$. Denote by $\text{End}_F(R)$ the space of additive maps $\varphi:R\rightarrow R$, such that $\varphi(f^pg)=f\varphi(g)$ for any $f,g\in R$. Then one has canonical isomorphisms as $R$-modules,
	\begin{equation}\label{eq:Fspl}
	\text{End}_F(R)\cong \text{Hom}_{\mathcal{O}_X}(F_*\mathcal{O}_X,\mathcal{O}_X).
	\end{equation}
	
	A Frobenius splitting of $X=\text{Spec}\,R$ is equivalent to a map $\varphi\in\text{End}_F(R)$ such that $\varphi(1)=1$. Such a map will also be called a Frobenius splitting of the $\kk$-algebra $R$.
	
	\subsection{Algebraic Frobenius splittings}\label{sec:alg}
	
	We endow $\kk$ with the $\mathbb{Z}[q^{\pm1/2}]$-module structure via $q^{1/2}\mapsto 1$. Fix $w\in W$. Let
	\[
	\intA_\kk(\mathfrak{n}(w))=\kk\otimes_{\mathbb{Z}[q^{\pm 1/2}]}\intA_{q^{1/2}}(\mathfrak{n}(w)).
	\]
	We write $_\kk f=1\otimes f\in \intA_\kk(\mathfrak{n}(w))$, for any $f\in\intA_{q^{1/2}}(\mathfrak{n}(w))$.
	
	
	Recall that $\varepsilon$ is a primitive $p$-th root of unity. We further endow $\kk$ with the $\mathbb{Z}[\varepsilon]$-algebra structure via $\varepsilon\mapsto 1$. Then we have canonical isomorphisms as $\kk$-algebras
	\begin{align*}
	\intA_\kk(\mathfrak{n}(w))\cong \kk\otimes_{\mathbb{Z}[\varepsilon]}\intA_\varepsilon(\mathfrak{n}(w))\cong \kk\otimes_{\mathbb{Z}[\varepsilon]}\intA_1(\mathfrak{n}(w)).
	\end{align*}
	
	Suppse $l(w)=r$. Fix a reduced expression $\underline{w}=(i_r,\dots,i_1)$ of $w$. Recall from \S \ref{sec:PBW} the dual PBW basis $\{e_{\underline{w}}^*(\mathbf{n})\mid\mathbf{n}\in\mathbb{N}^r\}$ of $\intA_{q^{1/2}}(\mathfrak{n}(w))$. Write $y_t\in\intA_\kk(\mathfrak{n}(w))$ to be the image of dual PBW basis element corresponding to $\mathbf{n}=\be_t$ for $1\leq t\leq r$. Here $\be_t$ ($1\leq t\leq r$) is the standard basis element of $\mathbb{Z}^r$ as usual.
	
	We endow $\intA_\kk(\mathfrak{n}(w))$ with a $Q_+$-grading given by setting $\wt(y_t)=\beta_t$ where $\beta_t=s_{i_1}\dots s_{i_{t-1}}(\alpha_{i_t})\in Q_+$ for $1\leq t\leq r$. The degree of a homogeneous element in $\intA_\kk(\mathfrak{n}(w))$ will be called its weight as before.
	
	The following claim follows from the dual Levendorskii--Soibelman formula \eqref{eq:prog} and the fact that $\{e^*_{\underline{w}}(\mathbf{n})\mid\mathbf{n}\in\mathbb{N}^r\}$ is a basis.
	
	(a) {\em Elements $y_t$ ($1\leq t\leq r$) are algebraically independent in $\intA_\kk(\mathfrak{n}(w))$. Moreover these elements generate $\intA_\kk(\mathfrak{n}(w))$ as a $\kk$-algebra.}
	
	In other words one has
	\begin{equation}\label{eq:poly}
	\intA_\kk(\mathfrak{n}(w))=\kk[y_1,\dots,y_t]
	\end{equation}
	as $\kk$-algebras. The identity \eqref{eq:poly} gives an isomorphism as varieties 
	\begin{equation}\label{eq:poly1}
	    \text{Spec }\intA_\kk(\mathfrak{n}(w))\cong \mathbb{A}^r.
	\end{equation}
	This isomorphism depends on the choice of the reduced expressions $\underline{w}$.
	
	Recall from Corollary \ref{cor:rest} that $\dofr$ and $\dfr$ restrict to subalgebras, that is,
	\[
	\dofr\mid_{\intA_1(\mathfrak{n}(w))}:\intA_1(\mathfrak{n}(w))\rightarrow\intA_\varepsilon(\mathfrak{n}(w)),\quad\dfr\mid_{\intA_\varepsilon(\mathfrak{n}(w))}:\intA_\varepsilon(\mathfrak{n}(w))\rightarrow\intA_1(\mathfrak{n}(w)).
	\]
	
	Set
	\begin{align*}
	&F_w=(\cdot)^p\otimes_{\mathbb{Z}[\varepsilon]}\dofr\mid_{\intA_1(\mathfrak{n}(w))}:\intA_\kk(\mathfrak{n}(w))\longrightarrow\intA_\kk(\mathfrak{n}(w)),\\
	&\varphi_w=(\cdot)^{1/p}\otimes_{\mathbb{Z}[\varepsilon]}\dfr\mid_{\intA_\varepsilon(\mathfrak{n}(w))}:\intA_\kk(\mathfrak{n}(w))\longrightarrow\intA_\kk(\mathfrak{n}(w)),
	\end{align*}
	where $(\cdot)^p$ and $(\cdot)^{1/p}$ are automorphisms on $\kk$ given by $t\mapsto t^p$ and $t\mapsto t^{1/p}$, respectively.
	
	\begin{prop}\label{prop:frclu}
		The map $F_w$ is the $p$-th power map on $\intA_\kk(\mathfrak{n}(w))$, and the map $\varphi_w$ is a Frobenius splitting of the $\kk$-algebra $\intA_\kk(\mathfrak{n}(w))$.
		
		Moreover, for any quantum cluster monomial $\mathbf{x}^{\mathbf{a}}\in \intA_{q^{1/2}}(\mathfrak{n}(w))$ ($\mathbf{a}\in\mathbb{N}^{r}$), we have
		\[ \varphi_w({_\kk\mathbf{x}^{\mathbf{a}}})={_\kk\mathbf{x}^{\mathbf{a}/p}}.
		\]
	\end{prop}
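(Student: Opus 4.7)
The plan is to deduce all three assertions from Theorem \ref{thm:main}, Proposition \ref{prop:Frli}, and Corollary \ref{cor:rest} by base-changing along the ring map $\mathbb{Z}[\varepsilon]\twoheadrightarrow \kk$ sending $\varepsilon\mapsto 1$. The Frobenius twists $(\cdot)^p$ and $(\cdot)^{1/p}$ built into $F_w$ and $\varphi_w$ are precisely what is needed to convert the $\mathbb{Z}[\varepsilon]$-linearity of $\dofr$ and $\dfr$ into the correct semilinearity on $\kk$.

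I would first treat assertion (3) together with the cluster-variable case of assertion (1). Under the canonical identifications $\intA_\kk(\mathfrak{n}(w))\cong \kk\otimes_{\mathbb{Z}[\varepsilon]}\intA_\varepsilon(\mathfrak{n}(w))\cong \kk\otimes_{\mathbb{Z}[\varepsilon]}\intA_1(\mathfrak{n}(w))$, the element $_\kk \mathbf{x}^\mathbf{a}$ corresponds to $1\otimes {_\varepsilon \mathbf{x}^\mathbf{a}}$ in one presentation and to $1\otimes {_1 \mathbf{x}^\mathbf{a}}$ in the other. Applying $\varphi_w=(\cdot)^{1/p}\otimes\dfr$ and invoking Theorem \ref{thm:main} together with Corollary \ref{cor:rest} (which ensures the image lies in $\intA_1(\mathfrak{n}(w))$) immediately gives $\varphi_w({_\kk \mathbf{x}^\mathbf{a}})={_\kk \mathbf{x}^{\mathbf{a}/p}}$, proving (3). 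Symmetrically, $F_w({_\kk \mathbf{x}^\mathbf{a}})={_\kk \mathbf{x}^{p\mathbf{a}}}$, and since $\Lambda(\mathbf{a},\mathbf{a})=0$ the product formula \eqref{eq:qprod} yields $(\mathbf{x}^\mathbf{a})^p=\mathbf{x}^{p\mathbf{a}}$ in $\intA_{q^{1/2}}(\mathfrak{n}(w))$, so $F_w$ and the Frobenius $(\cdot)^p$ coincide on every cluster monomial. Both maps are $\kk$-algebra endomorphisms ($F_w$ because $\dofr$ is one by Proposition \ref{prop:Frli} and $(\cdot)^p$ is one on $\kk$; the Frobenius because $\mathrm{char}\,\kk=p$), and the cluster variables of $\intA_{q^{1/2}}(\mathfrak{n}(w))$ generate it as an algebra by the very definition of a cluster algebra, so their $\kk$-images generate $\intA_\kk(\mathfrak{n}(w))$ and the two endomorphisms agree globally. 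This settles assertion (1).

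For assertion (2), the normalization $\varphi_w(1)=1$ is (3) at $\mathbf{a}=\mathbf{0}$. For the splitting relation $\varphi_w(f^p g)=f\varphi_w(g)$, I would base-change the identity $\dfr(\dofr(h)k)=h\dfr(k)$ from Proposition \ref{prop:Frli}: for $f=1\otimes {_1 h}$ and $g=1\otimes {_\varepsilon k}$, unwinding the tensor definitions yields $\varphi_w(F_w(f)\cdot g)=f\cdot \varphi_w(g)$, and substituting $F_w(f)=f^p$ from part (1) gives the desired relation. Both sides are additive in $f$ and $g$ and transform in matching $\kk$-semilinear ways under scaling, so the identity on pure tensors propagates to all of $\intA_\kk(\mathfrak{n}(w))$. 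The only real obstacle is careful bookkeeping of the two presentations $\kk\otimes_{\mathbb{Z}[\varepsilon]}\intA_1\cong\kk\otimes_{\mathbb{Z}[\varepsilon]}\intA_\varepsilon$ and of the Frobenius twists $(\cdot)^p,(\cdot)^{1/p}$ on $\kk$; no substantive mathematical difficulty is expected beyond this.
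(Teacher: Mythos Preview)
Your proposal is correct and follows essentially the same route as the paper: the cluster-monomial formula comes from Theorem \ref{thm:main}, and the Frobenius-splitting property of $\varphi_w$ is obtained by base-changing the identity $\dfr(\dofr(h)k)=h\,\dfr(k)$ from Proposition \ref{prop:Frli} together with $F_w=(\cdot)^p$. The one small difference is in the proof that $F_w$ is the $p$-th power map: the paper simply cites an external argument (\cite{BS}*{Proposition 5.2 (1)}), whereas you supply a self-contained proof by checking agreement on cluster variables (via Theorem \ref{thm:main} and $(\mathbf{x}^\mathbf{a})^p=\mathbf{x}^{p\mathbf{a}}$) and then using that both maps are ring homomorphisms and cluster variables generate; this is a perfectly clean replacement for the citation.
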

	
	\begin{proof}
		The assertion that $F_w$ is the $p$-th power map follows from the similar argument as in \cite{BS}*{Proposition 5.2 (1)}. The fact that $\varphi_w$ is a Frobenius splitting follows from Proposition \ref{prop:Frli}. The remaining part is a direct consequence of Theorem \ref{thm:main}.
	\end{proof}
	
	The Frobenius splitting $\varphi_w$ will be called the \emph{algebraic splitting} of $\intA_\kk(\mathfrak{n}(w))$. The next two subsections are devoted to show that the algebraic splittings coincide with the canonical splittings on Schubert cells in finite types. 
	
	\subsection{Geometric description of the algebraic splittings}\label{sec:homog}
	
	We write $N(w)=\text{Spec}\,\intA_\kk(\mathfrak{n}(w))$. Then the map $\varphi_w$ gives a Frobenius splitting of $N(w)$. The goal of this subsection is to describe the algebraic splitting $\varphi_w$ under the isomorphism \eqref{eq:Fspl1}.
	
	Thanks to \eqref{eq:poly1}, the canonical sheaf of $N(w)$ is trivial. Take a nowhere vanishing global section $v\in H^0(N(w),\omega_{N(w)}).$
	Note that the choice of $v$ is unique up to $\kk^*$-rescaling. 
	
	By \eqref{eq:Fspl} we have isomorphisms
	\begin{equation}\label{eq:isoF}
	\text{End}_F(\intA_\kk(\mathfrak{n}(w)))\cong H^0(N(w),\omega_{N(w)}^{1-p})=\intA_\kk(\mathfrak{n}(w))v^{1-p}
	\end{equation}
	as $\intA_\kk(\mathfrak{n}(w))$-modules. 
	
	Set $$\text{Supp}(w)=\{i\in I\mid s_i\leq w\text{ in the Bruhat order}\}$$ to be the set of indices of the support of $w$. For $i\in \text{Supp}(w)$, the quantum unipotent minor $D(w^{-1}\varpi_i,\varpi_i)$ is a frozen cluster variable in $\intA_{q^{1/2}}(\mathfrak{n}(w))$ up to a power of $q$. Therefore the element $$p_i={_\kk D(w^{-1}\varpi_i,\varpi_i)}\in\intA_\kk(\mathfrak{n}(w))$$ is a prime element in $\intA_\kk(\mathfrak{n}(w))$. Set 
	\[
	Y_i=\{x\in N(w)\mid {p_i}(x)=0\}
	\]
	to be the prime divisor of $N(w)$ defined by $p_i$. It corresponds to the Richardson divisor of the Schubert cell.
	
	\begin{prop}\label{prop:comp}
		The splitting $\varphi_w$ of $N(w)$ compatibly splits divisors $Y_i$, for $i\in\text{Supp}(w)$.
	\end{prop}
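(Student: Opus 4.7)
The plan is to verify the algebraic compatibility criterion $\varphi_w(p_i\, g) \in p_i A$ for every $g \in A := \intA_\kk(\mathfrak{n}(w))$. Two structural inputs will drive the argument. First, by \eqref{eq:poly}, $A$ is a polynomial ring in $r$ variables, hence a UFD. Second, by the conventions of \S\ref{sec:qcs} the map $J_\fz \to \mathrm{Supp}(w)$, $t\mapsto i_t$, is a bijection, and for the unique $t \in J_\fz$ with $i_t = i$ one has $\lambda_t = w^{-1}\varpi_i$, so after specialisation $p_i = {_\kk D_t}$.

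Given $g \in A$, I would apply the quantum Laurent phenomenon of \S\ref{sec:qca} to the initial seed $\cL_{\underline{w}}$ and base-change by $q^{1/2}\mapsto 1$ to obtain a finite Laurent expansion
\[
g \;=\; \sum_{\mathbf{a}} c_\mathbf{a}\, \mathbf{D}^\mathbf{a} \qquad \text{in } \kk[D_s^{\pm1}: s\in J_\ex;\, D_s: s\in J_\fz],
\]
with $a_s \geq 0$ for all $s \in J_\fz$. Choosing $\mathbf{b}' \in \mathbb{N}^J$ with $b'_s = 0$ for $s\in J_\fz$ and $b'_s$ large enough that $\mathbf{D}^{p\mathbf{b}'} g$ lies in the polynomial subring $\kk[D_1,\dots,D_r]\subset A$, I set $P := \mathbf{D}^{p\mathbf{b}'}g$ and compute
\[
(\mathbf{D}^{\mathbf{b}'})^p\cdot p_i g \;=\; D_t\, P.
\]
Every monomial $\mathbf{D}^\mathbf{c}$ appearing in $D_t P$ has $\mathbf{c} \in \mathbb{N}^J$ with $c_t \geq 1$; by Proposition \ref{prop:frclu} applied to the initial cluster and $p^{-1}$-linearity, $\varphi_w(\mathbf{D}^\mathbf{c}) = \mathbf{D}^{\mathbf{c}/p}$, which vanishes unless $\mathbf{c} \in p\mathbb{N}^J$, in which case $c_t/p \geq 1$. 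Hence $\varphi_w(D_t P) \in D_t\,\kk[D_1,\dots,D_r] \subset p_i A$. Since $\varphi_w$ is a Frobenius splitting by Proposition \ref{prop:frclu}, one has $\varphi_w((\mathbf{D}^{\mathbf{b}'})^p\cdot p_i g) = \mathbf{D}^{\mathbf{b}'}\cdot \varphi_w(p_i g)$, yielding $\mathbf{D}^{\mathbf{b}'}\varphi_w(p_i g) \in p_i A$.

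To conclude I would argue that $\mathbf{D}^{\mathbf{b}'} = \prod_{s \in J_\ex} D_s^{b'_s}$ is coprime to the prime $p_i = D_t$ in the UFD $A$: if $D_t$ divided some $D_s$ (with $s \in J_\ex$) in $A$, the quotient $D_s/D_t$ would lie in $A$ and thus in the Laurent polynomial ring $\kk[D_{s'}^{\pm1}: s'\in J_\ex;\, D_{s'}: s'\in J_\fz]$ containing $A$; but this ring does not contain $D_t^{-1}$ since $t \in J_\fz$, contradiction. Hence UFD divisibility forces $\varphi_w(p_i g) \in p_i A$, as required. The main obstacle I foresee is precisely the gap between Proposition \ref{prop:frclu}, which controls $\varphi_w$ only on cluster monomials with $\mathbf{a}\in\mathbb{N}^r$ in a single cluster, and the behaviour of $\varphi_w$ on a general element of $A$; the Laurent-clearing trick together with the Frobenius splitting identity is the device that bridges this gap, and the UFD structure of $A$ is what ultimately lets the auxiliary factor $\mathbf{D}^{\mathbf{b}'}$ be cancelled.
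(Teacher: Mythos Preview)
Your proof is correct and follows essentially the same route as the paper: both arguments use the Laurent phenomenon together with Proposition~\ref{prop:frclu} to see that $\varphi_w$ preserves the ideal $(p_i)$ on the initial cluster chart, and then extend this to all of $N(w)$. The paper packages the extension step geometrically, restricting to the open dense subset $U=\mathrm{Spec}\,T$ (where $T$ is the partial Laurent ring in the ${_\kk D_s}$) and invoking \cite{BK}*{Lemma~1.1.7~(ii)} to pass back to $N(w)$, whereas you carry out the equivalent algebraic manoeuvre by clearing denominators with $\mathbf{D}^{p\mathbf{b}'}$ and then cancelling the auxiliary factor $\mathbf{D}^{\mathbf{b}'}$ via primality of $p_i$ in the UFD $A$.
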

	
	\begin{proof}
		Take $(\mathbf{x}_t)_{t\in J}$ to be a cluster of $\intA_{q^{1/2}}(\mathfrak{n}(w))$. Let $T$ be the $\kk$-subalgebra of the fractional field of $\intA_\kk(\mathfrak{n}(w))$, generated by ${_\kk x_t}$ $(t\in J_\fz)$ and ${_\kk (x_{t'}^{\pm1})}$ $(t'\in J_\ex)$. By the Laurent phenomenon, the algebra $\intA_\kk(\mathfrak{n}(w))$ is contained in $T$. Let $U=\text{Spec }T$. Then $U$ is an open dense subset of $N(w)$. The Frobenius splitting $\varphi_w$ restricts a splitting of $U$. Thanks to Proposition \ref{prop:frclu}, this splitting compatibly splits subvarieties (of $U$) defined by $_\kk x_t\mid_U=0$ for any $t\in J_\fz$. By construction we have
		\[
		\prod_{t\in J_\fz}{_\kk x_t}=\prod_{i\in\text{Supp}(w)}p_i.
		\]
		Hence ${\varphi_w\mid_U}$ compatibly splits $Y_i\cap U$ for any $i\in\text{Supp}(w)$. By \cite{BK}*{Lemma 1.1.7 (ii)} we complete the proof.
	\end{proof}
	
	
	Let $\kk[P]$ be the group algebra (over $\kk$) of the weight lattice $P$, and $H$ be the algebraic torus associated to $\kk[P]$. The $Q_+$-grading on $\intA_\kk(\mathfrak{n}(w))$ gives an algebraic action of the torus $H$ on $N(w)$. Then $H$ also acts on the space $\text{End}_F(\intA_\kk(\mathfrak{n}(w))$ as $\kk$-linear maps, given by
	\[
	(h*\varphi)f=h(\varphi(h^{-1}f))\qquad\text{for $h\in H$, $\varphi\in\intA_\kk(\mathfrak{n}(w))$ and $f\in \intA_\kk(\mathfrak{n}(w))$}.
	\]
	A splitting $\varphi$ of $\intA_\kk(\mathfrak{n}(w))$ is called \emph{H-equivariant} if $h*\varphi=\varphi$, for all $h\in H$. It is easy to see that a splitting $\varphi$ is $H$-equivariant if and only if
	\[
	\varphi\big(\intA_\kk(\mathfrak{n}(w))_\mu\big)\subset \intA_\kk(\mathfrak{n}(w))_{\mu/p}\quad \text{for $\mu\in Q_+$.}
	\]
	Here $\intA_\kk(\mathfrak{n}(w))_{\mu/p}$ is understood as zero space if $\mu\not\in pQ_+$. 
	
	\begin{prop}\label{prop:char}
		The splitting $\varphi_w$ is the unique $H$-equivariant splitting which compatibly splits all the divisors $Y_i$ ($i\in \text{Supp}(w)$). 
		
		Moreover, this unique splitting is given by $\sigma_w^{p-1}$ under the isomorphism \eqref{eq:isoF} up to a $\kk^*$-scalar, where 
		\[
		\sigma_w=(\prod_{i\in\text{Supp}(w)}p_i)v^{-1}\in H^0(N(w),\omega_{N(w)}^{-1}).
		\]
	\end{prop}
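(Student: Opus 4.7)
The plan is to use the classical correspondence \eqref{eq:isoF} to translate Frobenius splittings into sections of $\omega_{N(w)}^{1-p}$, and then pin down the relevant section by combining $H$-equivariance with the divisibility forced by compatibility with the $Y_i$.

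First, I would observe that $\varphi_w$ itself is an $H$-equivariant splitting that compatibly splits every $Y_i$: the $H$-equivariance follows from Proposition \ref{prop:frclu} (which asserts that $\varphi_w$ divides weights by $p$), and the compatibility is the content of Proposition \ref{prop:comp}. Thus the existence of such a splitting is already settled, and what remains is to show uniqueness and to identify the corresponding section as $\sigma_w^{p-1}$ up to a $\kk^*$-scalar.

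For uniqueness, let $\varphi'$ be another such splitting. Under \eqref{eq:isoF}, which is $\intA_\kk(\mathfrak{n}(w))$-linear and $H$-equivariant, $\varphi'$ corresponds to $gv^{1-p}$ for some $g\in\intA_\kk(\mathfrak{n}(w))$. The $H$-equivariance of $\varphi'$ forces $gv^{1-p}$ to be of $H$-weight zero. The nowhere vanishing form $v=dy_1\wedge\cdots\wedge dy_r$ has weight $\sum_t\beta_t$, so $g$ must have weight $(p-1)\sum_t\beta_t$. By the standard criterion for compatible splittings on smooth varieties (see \cite{BK}*{Proposition 1.3.11}), the compatibility of $\varphi'$ with the prime divisor $Y_i=\{p_i=0\}$ is equivalent to $p_i^{p-1}\mid g$. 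Since the $p_i$ ($i\in\text{Supp}(w)$) are pairwise coprime primes in the UFD $\intA_\kk(\mathfrak{n}(w))\cong\kk[y_1,\dots,y_r]$, we may write $g=h\prod_{i\in\text{Supp}(w)}p_i^{p-1}$ for some $h\in\intA_\kk(\mathfrak{n}(w))$. Using the classical identity $\rho-w^{-1}\rho=\sum_{\alpha\in\Delta_+\cap w^{-1}\Delta_-}\alpha$, the weight of $\prod_ip_i^{p-1}$ is $(p-1)\sum_i(\varpi_i-w^{-1}\varpi_i)=(p-1)\sum_t\beta_t$, so $h$ has $Q_+$-weight zero and is therefore a scalar. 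The splitting condition $\varphi'(1)=1$ then pins down this scalar, proving uniqueness.

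The moreover part is now immediate: $\sigma_w^{p-1}=(\prod_ip_i)^{p-1}v^{1-p}$ lies in the one-dimensional $\kk$-subspace of $H^0(N(w),\omega_{N(w)}^{1-p})$ cut out by these two conditions, so the section associated to $\varphi_w$ is a $\kk^*$-multiple of $\sigma_w^{p-1}$. The main anticipated obstacle is verifying cleanly that the $H$-action on $\text{End}_F(\intA_\kk(\mathfrak{n}(w)))$ corresponds under \eqref{eq:isoF} to the natural $H$-action on sections of $\omega_{N(w)}^{1-p}$, and confirming the precise form of the divisibility criterion for compatibility; once these two technical ingredients are in place, the rest of the argument is a calculation in weights and a UFD factorization.
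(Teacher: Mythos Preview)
Your proposal is correct and follows the same overall architecture as the paper: establish that $\varphi_w$ satisfies the two properties, then prove uniqueness by translating to a section $gv^{1-p}$, computing $\wt(g)$, showing $\prod_i p_i^{p-1}\mid g$, and concluding by a weight count that the cofactor is scalar.

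Two points of execution differ and are worth noting. First, for the weight of $g$: you compute $\wt(v)=\sum_t\beta_t$ directly from $v=dy_1\wedge\cdots\wedge dy_r$, whereas the paper instead uses the splitting criterion from \cite{BK}*{Example~1.3.1} (the coefficient of $y_1^{p-1}\cdots y_r^{p-1}$ in $g$ is nonzero) together with homogeneity to read off $\wt(g)$. Your route is slightly cleaner. Second, and more substantively, for the implication ``compatibility with $Y_i$ $\Rightarrow$ $p_i^{p-1}\mid g$'' you cite \cite{BK}*{Proposition~1.3.11}, but the paper does \emph{not} treat this as a black box: it proves the divisibility by an explicit local computation at a smooth point of $Y_i$, using the trace description of the induced splitting from \cite{BK}*{Lemma~1.3.6 and Proposition~1.3.7}. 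You should check your citation carefully, since the standard statements in that section of \cite{BK} tend to run in the opposite direction (divisibility or a $(p-1)$-st power structure implies compatibility); your own closing remark correctly flags this as the place requiring care. The $H$-equivariance of the isomorphism \eqref{eq:isoF}, which you also flag, is handled in the paper by citing \cite{BK}*{Lemma~4.1.14}.
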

	
	\begin{proof}
		Thanks to Proposition \ref{prop:comp} and \eqref{eq:homo}, the splitting $\varphi_w$ satisfies the requirement. We next prove the uniqueness. 
		
		Let $\varphi$ be an $H$-equivariant Frobenius splitting which compatibly splits $Y_i$, for all $i\in\text{Supp}(w)$. Suppose $\varphi$ is given by $gv^{1-p}\in H^0(N(w),\omega_{N(w)}^{1-p})$ for some $g\in \intA_\kk(\mathfrak{n}(w))$. By \cite{BK}*{Lemma 4.1.14}, the isomorphism \eqref{eq:isoF} is moreover $H$-equivariant. Since $\varphi$ is $H$-equivariant and $v^{1-p}$ is an $H$-eigenvector, we deduce that $g$ is an $H$-eigenvector. Equivalently, the element $g$ is homogeneous with respect to the $Q_+$-grading. 
		
		Take a reduced expression $\underline{w}=(i_r,\dots,i_1)$ of $w$. Set $\beta_k=s_{i_1}\dots s_{i_{k-1}}(\alpha_{i_k})\in Q_+$ for $1\leq k\leq r$. Recall the elements $y_t$ ($1\leq t\leq r$) and the equality \eqref{eq:poly} in \S \ref{sec:alg}.
		
		We write $g$ in terms of the polynomial on variables $y_1,\dots, y_r$. Then the monomial $y_1^{p-1}\dots y_r^{p-1}$ occurs with a nonzero coefficient (\cite{BK}*{Example 1.3.1}). Hence
		\begin{equation}\label{eq:wtcon}
		\wt(g)=\wt(y_1^{p-1}\dots y_r^{p-1})=(p-1)(\beta_1+\dots+\beta_r).
		\end{equation}
		
		On the other hand, take any $i\in \text{Supp}(w)$. We 
		claim that $p_i^{p-1}$ divides $g$. Suppose the claim is not correct. Write $g=p_i^lg_1$, with $0\leq l<p-1$ and $(p_i,g_1)=1$. 
		
		Take $x\in Y_i$ to be a smooth point of $Y_i$. Then we can choose a system of local coordinates $t_1,\dots,t_r$ at $x$ with $t_1=p_i$ in the local ring $\mathcal{O}_{N(w),x}$. Since $g_1(x)\neq 0$, the section $g_1v^{1-p}$ does not vanish at $x$, which implies the local expression of $g_1v^{1-p}$ have nonzero constant term. Therefore at the point $x$, the section $gv^{1-p}=t_1^lg_1v^{1-p}$ has the local expression of the form
		\[
		t_1^l(a+\sum_{\mathbf{j}\neq\mathbf{0}}d_{\mathbf{j}}\dt^\mathbf{j})(dt_1\wedge\dots\wedge dt_r)^{1-p}\qquad \text{with $a\neq 0$}.
		\]

		By \cite{BK}*{Lemma 1.3.6 \& Proposition 1.3.7}, the splitting $\varphi_w$ induces a splitting on the local ring $\mathcal{O}_{N(w),x}\subset \kk[[t_1,\dots,t_r]]$ given by
		\[
		\widetilde{\varphi_w}(f)={\text{Tr}}\big(ft_1^l(a+\sum_{\mathbf{j}\neq\mathbf{0}}d_{\mathbf{j}}\dt^\mathbf{j})\big)\qquad \text{for $f\in\mathcal{O}_{N(w),x}$.}
		\]
		Here ${\text{Tr}}$ is the trace map on the ring of formal power series defined as in \cite{BK}*{Lemma 1.3.6}. Moreover $\widetilde{\varphi_w}$ should preserves the principal ideal generated by $t_1$, because $\varphi_w$ compatibly splits $Y_i$. 
		
		Note that the monomial $t_1^{p-1-l}t_2^{p-1}\dots t_r^{p-1}$ belongs to $t_1\mathcal{O}_{N(w),x}$, but its image  $$\widetilde{\varphi_w}(t_1^{p-1-l}t_2^{p-1}\dots t_r^{p-1})\in \kk[[t_1,\dots, t_r]]$$ has constant term $a$, which implies that it does not belong to $t_1\mathcal{O}_{N(w),x}$. This is a contradiction.
		
		We have proved that $p_i^{p-1}\mid g$ for any $i\in\text{Supp}(w)$. Hence we can write 
		$$g=\prod_{i\in\text{Supp}(w)}p_i^{p-1}g',$$
		for some $g'\in\intA_\kk(\mathfrak{n}(w))$.
		
		Notice that 
		\[
		\wt\big(\prod_{i\in\text{Supp}(w)}p_i^{p-1}\big)=(p-1)\big(\sum_{i\in\text{Supp}(w)}\varpi_i-w^{-1}(\sum_{i\in \text{Supp}(w)}\varpi_i)\big)=(p-1)(\beta_1+\dots+\beta_r),
		\]
		where the last equality follows from \cite{KU}*{1.3.22}. Combining  \eqref{eq:wtcon} we deduce that $\wt(g')=0$. Hence, $g'$ belongs to $\kk^*$.
		
		Therefore $\varphi$ is unique and it is determined by $\sigma_w^{p-1}$ up to a nonzero scalar. We complete the proof.
	\end{proof}
	
	\subsection{Comparison with geometric construction}\label{sec:compa}
	
	From now on, we assume that our root datum is of finite type. 
	
	Let $G$ be the reductive group defined over $\kk$ associated to the root datum, with a standard Borel subgroup $B$ and a standard opposite Borel subgroup $B_-$. Let $N$ and $N_-$ be the unipotent radicals of $B$ and $B_-$, respectively. Set $H=B\cap B_-$ to be the maximal torus, and $W=N(H)/H$ to be the Weyl group. For any $w\in W$, set
	\[
	N(w)= N\cap w^{-1}N_-w.
	\]
	It is a closed subgroup of the unipotent group $N$. Let $H$ acts on $N(w)$ by conjugation. Note that one has $N=N(w_0)$, where $w_0\in W$ is the longest element in the Weyl group.
	
	The coordinate ring of $H$ is canonically isomorphic to the group algebra $\kk[P]$, and the coordinate ring of the closed subgroup $N\cap w^{-1}N_-w$ is canonically isomorphic to $\intA_\kk(\mathfrak{n}(w))$ as $\kk$-algebras by \cite{Ki}*{Theorem 4.44}. Moreover, the $H$-action on $N(w)$ by conjugation coincides with the one defined by the $Q_+$-grading as in \S \ref{sec:homog}. Therefore there is no conflict with previous notations.
	
	We notice that although the result in \cite{Ki} is over the field of complex numbers, the proof applies to arbitrary fields, because the author essentially works over integral forms. 
	
	Let $G/B$ be the associated flag variety and $BwB/B$ be the Schubert cell for $w\in W$. Let $H$ acts on Schubert cells by left multiplications. Then there is an $H$-equivariant isomorphism, 
	\begin{equation}\label{id}
	\iota_w:N(w)\xrightarrow{\sim} Bw^{-1}B/B,\qquad\text{given by } n\mapsto n\cdot w^{-1}B/B.
	\end{equation}
	
	By \cite{BK}*{Theorem 4.1.15} the flag variety $G/B$ admits a \emph{$B$-canonical} (see \cite{BK}*{\S 4.1} for the definition) Frobenius splitting $\varphi$ which compatibly splits Schubert varieties $\overline{BwB/B}$ and opposite Schubert varieties $\overline{B_-wB/B}$, for any $w\in W$. Therefore the splitting $\varphi$ induces a splitting of the Schubert cell $Bw^{-1}B/B$, which we call the \emph{canonical splitting}. Under the isomorphism $\iota_w$, we get a splitting $\varphi'_w$ of $N(w)$.
	
	\begin{corollary}\label{cor:alggeo}
		As Frobenius splittings of $N(w)$, we have $\varphi_w=\varphi_w'$.
	\end{corollary}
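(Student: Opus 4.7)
The plan is to invoke the uniqueness statement of Proposition \ref{prop:char} and verify that the geometric splitting $\varphi_w'$ enjoys the same two characterizing properties as the algebraic splitting $\varphi_w$, namely $H$-equivariance and compatibility with the divisors $Y_i$ for $i \in \text{Supp}(w)$.

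First, $\varphi_w'$ is $H$-equivariant: the canonical splitting $\varphi$ of $G/B$ is $B$-canonical by \cite{BK}*{Theorem 4.1.15}, so in particular it is $H$-canonical (this follows by restricting the $B$-equivariance property along $H \subset B$, see \cite{BK}*{\S 4.1}); since $\iota_w$ is $H$-equivariant by the formula in \eqref{id}, the induced splitting $\varphi_w'$ on $N(w)$ is $H$-equivariant. Second, I need to check compatibility with each $Y_i$. The key point is to identify $\iota_w(Y_i)$ geometrically inside $G/B$. The divisor $Y_i$ is cut out by the image $p_i$ of the unipotent quantum minor $D(w^{-1}\varpi_i,\varpi_i)$ after specialising $q^{1/2} \mapsto 1$. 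Under the identification of $\intA_\kk(\mathfrak{n}(w))$ with the coordinate ring of $N \cap w^{-1}N_-w$ and the embedding $\iota_w$ of this group into $G/B$, this specialised minor is the restriction of the generalised minor $\Delta_{w^{-1}\varpi_i,\varpi_i}$ to the Schubert cell $Bw^{-1}B/B$, whose vanishing locus is precisely the intersection of $Bw^{-1}B/B$ with an opposite Schubert divisor (the one indexed by $s_i$ in the appropriate sense, via the Bruhat condition $\Delta_{w^{-1}\varpi_i,\varpi_i} = 0$). Since the canonical splitting of $G/B$ compatibly splits every opposite Schubert variety, by \cite{BK}*{Lemma 1.1.7 (ii)} it compatibly splits these divisors in $Bw^{-1}B/B$, and transferring along $\iota_w$ shows that $\varphi_w'$ compatibly splits $Y_i$.

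Combining the two properties with Proposition \ref{prop:char} yields $\varphi_w = \varphi_w'$. The main technical point is the identification of $Y_i$ with the preimage of an opposite Schubert divisor; this is a classical statement about generalised minors at $q=1$ and is not an obstacle in principle, but it is the only step which really goes beyond the formal uniqueness machinery. Everything else (the $H$-equivariance, the reduction to checking compatibility divisor by divisor, and the normalisation, which is automatically fixed by the splitting condition $\varphi(1) = 1$) is a direct consequence of Proposition \ref{prop:char} and the established properties of the $B$-canonical splitting.
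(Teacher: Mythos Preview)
Your proposal is correct and follows essentially the same route as the paper: both arguments verify that $\varphi_w'$ is $H$-equivariant (via $B$-canonicity of $\varphi$ and $H$-equivariance of $\iota_w$) and compatibly splits each $Y_i$ (by identifying $\iota_w(Y_i)$ with the intersection $Bw^{-1}B/B \cap \overline{B_-s_iB/B}$ and invoking \cite{BK}*{Lemma 1.1.7}), then appeal to the uniqueness in Proposition \ref{prop:char}. The only cosmetic difference is that the paper makes the identification of $\iota_w(Y_i)$ explicit via the bilinear form $_\kk(\cdot,\cdot)_{\varpi_i}$ on $\intV_\kk(\varpi_i)$, whereas you invoke the classical description of generalised minors; these amount to the same computation.
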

	
	\begin{proof}
		For $\lambda\in P_+$, set $\intV_\kk(\lambda)=\kk\otimes_{\mathbb{Z}[q^{\pm1}]}\intV(\lambda)$. Then $\intV_\kk(\lambda)$ carries a rational $G$-action. The bilinear form $_\kk(\cdot,\cdot)_\lambda$ on $\intV_\kk(\lambda)$ is defined to be the base change of the form defined in \S \ref{sec:minor}. Then for any $i\in I$ and $x\in N(w)$ we have
		\begin{align*}
		_\kk(x v_{w^{-1}\varpi_i},v_{\varpi_i})_{\varpi_i}=0 &\iff {_\kk(x\dot{w}^{-1} v_{\varpi_i},v_{\varpi_i})_{\varpi_i}}=0\\&\iff x\dot{w}^{-1}\in \overline{B_-s_iB}\\&\iff \iota_w(x)\in \overline{B_-s_iB/B}, 
		\end{align*}
		where $\dot{w}\in G$ is a lift of the Weyl group element $w$.
		
		Therefore we have
		\begin{equation}\label{eq:ioY}
		\iota_w(Y_i)=Bw^{-1}B/B\cap \overline{B_-s_iB/B},\quad \text{for $i\in \text{Supp}(w)$.}
		\end{equation}
		
		By \cite{BK}*{Lemma 1.1.7}, the induced splitting of $Bw^{-1}B/B$ compatibly splits the intersection $Bw^{-1}B/B\cap \overline{B_-s_iB/B}$, so the splitting $\varphi_w'$ compatibly splits $Y_i$ for any $i\in \text{Supp}(w)$ by \eqref{eq:ioY}. Since $\varphi$ is $B$-canonical and $\iota_w$ is $H$-equivariant, we deduce that $\varphi_w'$ is $H$-equivariant. Hence by Proposition \ref{prop:char} we have $\varphi_w=\varphi_w'$.
	\end{proof}
	
	
	\subsection{Relation with reduction maps}\label{sec:red}
	
	In this subsection we explain that the canonical splittings of Schubert cells are compatible with reduction maps. 
	
	Suppose $w=v'v$ in $W$ with $l(w)=l(v')+l(v)$. We choose a reduced expression $\underline{w}=(i_{l(w)},\dots,i_1)$ such that $(i_{l(v)}\dots,i_1)$ gives a reduced expression of $v$. It follows from the definitions that we have a natural embedding as $\kk$-algebras,
	\begin{equation*}
	\begin{tikzcd}
	&\intA_{\kk}(\mathfrak{n}(v))\arrow[r,hook]&\intA_{\kk}(\mathfrak{n}(w)).
	\end{tikzcd}
	\end{equation*}
	
	We prove Corollary \ref{cor:red} here. By \eqref{eq:poly1}, take isomrphisms $N(w)\cong \mathbb{A}^{l(w)}$ and $N(v)\cong\mathbb{A}^{l(v)}$ associated to the chosen reduced expressions. It is direct to see that under the isomorphisms $\iota_w$ and $\iota_v$, the reduction map is just the projection onto the first $l(v)$ coordinates. Therefore the comorphism $(\pi_v^w)^*:\intA_{\kk}(\mathfrak{n}(v))\rightarrow\intA_{\kk}(\mathfrak{n}(w))$ coincides with the natural embedding. 
		
		Hence it will suffice to check the commutativity of the following diagram as abelian groups,
		\begin{equation*}
		\begin{tikzcd}
		&\intA_\kk(\mathfrak{n}(v))\arrow[r,hook] \arrow[d,"\varphi_v"'] &\intA_\kk(\mathfrak{n}(w)) \arrow[d,"\varphi_w"]\\& \intA_\kk(\mathfrak{n}(v))\arrow[r,hook]&\intA_\kk(\mathfrak{n}(w)).
		\end{tikzcd}
		\end{equation*}
		This diagram clearly commutes, because by construction $\varphi_v$ and $\varphi_w$ are both restrictions of the same map on $\intA_\kk(\mathfrak{n})$.

\end{document}